\pdfoutput=1 
\documentclass[12pt,twoside,leqno]{article}
\usepackage{amsmath,amssymb,mathtools}
\usepackage{verbatim}
\usepackage{dsfont,bm,mathrsfs}
\usepackage{color}
\usepackage{graphicx}
\usepackage{amsthm}
\usepackage[shortlabels]{enumitem}
\usepackage{esint} 
\usepackage[title]{appendix}
\usepackage[T1]{fontenc}

\usepackage[dvipsnames]{xcolor}
\definecolor{darkgreen}{rgb}{0,0.75,0}
\definecolor{darkred}{rgb}{0.75,0,0}
\definecolor{darkmagenta}{rgb}{0.5,0,0.5}
\usepackage[colorlinks,citecolor=Emerald,linkcolor=NavyBlue,urlcolor=RoyalBlue]{hyperref}
\newtheorem{thm}{Theorem}[section]

\newtheorem{lem}[thm]{Lemma}
\newtheorem{prop}[thm]{Proposition}

\theoremstyle{definition}
\newtheorem{defn}[thm]{Definition}

\newtheorem{rmk}[thm]{Remark}

\numberwithin{equation}{section}
\numberwithin{figure}{section}

\allowdisplaybreaks

\newcommand{\norm}[1]{\lVert#1\rVert}

\newcommand{\abs}[1]{\lvert#1\rvert}
\newcommand{\absbig}[1]{\bigl\lvert#1\bigr\rvert}
\newcommand{\absBig}[1]{\Bigl\lvert#1\Bigr\rvert}
\newcommand{\absbigg}[1]{\biggl\lvert#1\biggr\rvert}
\newcommand{\absBigg}[1]{\Biggl\lvert#1\Biggr\rvert}
\newcommand{\trinorm}[1]{{\left\vert\kern-0.25ex\left\vert\kern-0.25ex\left\vert #1
    \right\vert\kern-0.25ex\right\vert\kern-0.25ex\right\vert}}
\newcommand{\indicator}[1]{\mathds{1}_{#1}} 

\makeatletter
\DeclareRobustCommand\widecheck[1]{{\mathpalette\@widecheck{#1}}}
\def\@widecheck#1#2{%
   \setbox\z@\hbox{\m@th$#1#2$}%
   \setbox\tw@\hbox{\m@th$#1%
      \widehat{%
         \vrule\@width\z@\@height\ht\z@
         \vrule\@height\z@\@width\wd\z@}$}%
   \dp\tw@-\ht\z@
   \@tempdima\ht\z@ \advance\@tempdima2\ht\tw@ \divide\@tempdima\thr@@
   \setbox\tw@\hbox{%
      \raise\@tempdima\hbox{\scalebox{1}[-1]{\lower\@tempdima\box\tw@}}}%
   {\ooalign{\box\tw@ \cr \box\z@}}}
\makeatother

\newcommand{\measure}{\mathfrak{m}}
\newcommand{\metric}{d}

\newcommand{\mr}[1]{{\tt \href{http://mathscinet.ams.org/mathscinet-getitem?mr=#1}{MR#1}}}
\newcommand{\arxiv}[1]{{\tt \href{http://arxiv.org/abs/#1}{arXiv:#1}}}

\begin{document}

	\font\titlefont=cmbx14 scaled\magstep1
	\title{\titlefont Higher-order differentiability of Korevaar--Schoen energy forms and energy measures}

	\author{ 
	Ryosuke Shimizu\footnote{\scriptsize Graduate School of Informatics, Kyoto University, Kyoto, Japan. \texttt{r.shimizu@acs.i.kyoto-u.ac.jp}}
	}
	\date{October 6, 2026}
	\maketitle
	\vspace{-0.5cm} 
	\begin{abstract}
		In this paper, we investigate the differentiability of Korevaar--Schoen $p$-energy forms and the associated $p$-energy measures. 
		We obtain higher-order derivatives by virtue of explicit realizations of Korevaar--Schoen $p$-energy forms and the associated $p$-energy measures as subsequential pointwise limits of certain double integrals.
		\vskip.2cm
		\noindent {\it Keywords:} Korevaar--Schoen $p$-energy form, $p$-energy measure, higher-order differentiability
		\vskip.2cm
        \noindent {\it 2020 Mathematics Subject Classification:} Primary 46E36; Secondary 28A80, 47A07
	\end{abstract}

\section{Introduction}\label{sec:intro} 
Korevaar and Schoen \cite{KS93} have introduced Dirichlet $p$-energy functionals and the associated Sobolev spaces for maps $u$ from a smooth Riemannian manifold $X$ to a complete metric space $(Y,d_{Y})$, roughly speaking, by interpreting the limit as $\varepsilon \to 0$ of  the ball averages 
\[
\frac{1}{\measure(B(x,\varepsilon))}\int_{B(x,\varepsilon)}\frac{d_{Y}(u(x),u(x+y))}{\varepsilon^{p}}\,\measure(dy)
\] 
as the the $p$-th power of the energy density of $u$ at $x$, which corresponds to $\abs{\nabla u(x)}^{p}$ when $Y = \mathbb{R}$. 
Here, $1 < p < \infty$, $X$ is equipped with the usual Riemannian distance $d$ and the volume measure $\measure$, $B(x,\varepsilon)$ is the open ball in $X$ centered at $x \in X$ with radius $\varepsilon > 0$. 
Although one of the main motivations of their work \cite{KS93} is to investigate harmonic maps into non-smooth metric spaces, even in the case of $Y = \mathbb{R}$, their approach has turned out to be useful in characterizing the $(1,p)$-Sobolev spaces $\mathcal{F}_{p}$ on metric measure spaces including fractals; see, e.g., \cite{ARB25,AEBS25+,Bau24,CGYZ26,KS26,KM98,MS25}. 
For example, if $X$ is the Sierpi\'nski gasket or the Sierpi\'nski carpet (with certain choices of the metric $d$ and the measure $\measure$) instead of a smooth Riemannian manifold, then there exist an intrinsic parameter $d_{\mathrm{w},p}$, called the \emph{$p$-walk dimension}, such that the $(1,p)$-Sobolev space $\mathcal{F}_{p}$ constructed via graph approximations and discrete energy forms on them in \cite{CGQ22,HPS04,Kig23,MS25} can be characterized as the set of all functions $u \in L^p(X,\measure)$ satisfying 
\[
\limsup_{\varepsilon \to 0}\int_{X}\frac{1}{\measure(B(y,\varepsilon))}\int_{B(y,\varepsilon)}\frac{\abs{u(x) - u(y)}^{p}}{\varepsilon^{d_{\mathrm{w},p}}}\,\measure(dx)\,\measure(dy) < \infty. 
\]
Furthermore, there exists a sequence $\{ \varepsilon_n \}_{n}$ in $(0,\infty)$ converging to $0$ such that the limit 
\[
\lim_{n \to \infty}\int_{X}\frac{1}{\measure(B(y,\varepsilon_{n}))}\int_{B(y,\varepsilon_{n})}\frac{\abs{u(x) - u(y)}^{p}}{\varepsilon_{n}^{d_{\mathrm{w},p}}}\,\measure(dx)\,\measure(dy)
\eqqcolon \mathcal{E}_{p}(u)
\]
exists for all Sobolev functions $u \in \mathcal{F}_{p}$. 
In addition, the associated $p$-energy measure $\Gamma_{p}\langle u \rangle$ can be introduced as an analogue of $\abs{\nabla u}^{p}\,dx$ in the Euclidean case: There exists a unique positive Radon measure $\Gamma_{p}\langle u \rangle$ on $X$ such that for all compactly supported continuous functions $\varphi$ on $X$, 
\begin{align*}
&\int_{X}\varphi\,d\Gamma_{p}\langle u \rangle \\
&= \lim_{n \to \infty}\int_{X}\frac{1}{\measure(B(y,\varepsilon_{n}))}\int_{B(y,\varepsilon_{n})}\frac{\abs{u(x) - u(y)}^{p}}{\varepsilon_{n}^{d_{\mathrm{w},p}}}\varphi(x)\,\measure(dx)\,\measure(dy). 
\end{align*}
See \cite{KS26} for further details on these characterizations. 

The main focus of this paper is the differentiability of the Korevaar--Schoen energy forms $(\mathcal{E}_{p},\mathcal{F}_{p})$ and the associated energy measures $\Gamma_{p}$. 
The constructions of $p$-energy forms on fractals in the previous studies \cite{CGQ22,HPS04,Kig23,MS25} include many steps such as the operation of taking a subsequential scaling limit of discrete $p$-energy forms.
Due to these operations, the differentiability of the resulting energy forms are not evident. 
Indeed, in order to develop feasible non-linear potential theory on fractals, Strichartz and Wong \cite{SW04} have proposed an approach based on the subderivatives of the $p$-energy forms on the Sierpi\'nski gaskets constructed in the earlier work \cite{HPS04}. 
It was subsequently observed in \cite{KS.gc} that the $L^p$-analogue of parallelogram law called the $p$-Clarkson's inequality implies the first-order Fr\'echet differentiability of energy forms. (It was also shown that the previous constructions in \cite{CGQ22,HPS04,Kig23,MS25} can be modified to satisfy $p$-Clarkson's inequality; see \cite[Sections 3,4, and 8]{KS.gc}.)
This led to the development of the foundations of non-linear potential theory based on their first-order derivatives; see \cite[Section 6]{KS.gc} for details.  

Although first-order differentiability is now well understood, it remains unclear whether there exist natural functional inequalities for the energy forms, analogous to $p$-Clarkson's inequality for the first-order differentiability, that guarantee higher-order Fr\'echet differentiability. 
The aim of this paper is to establish higher-order differentiability of the Korevaar--Schoen energy forms and of the associated energy measures, as in the case of the $L^p$-norm proved in \cite{Sun67}. 
To this end, we take advantage of the construction of the Korevaar--Schoen energy forms/measures established in \cite{KS26}.  

This paper is organized as follows. 
In Section \ref{sec:preli}, we recall the construction of Korevaar--Schoen energy forms $(\mathcal{E}_{p},\mathcal{F}_{p})$ and the associated energy measures $\Gamma_{p}$ established in \cite{KS26}, and construct multilinear energy forms $\mathcal{E}_{p}^{(l)}$ that will describe the higher-order derivatives of energy forms. 
We prove our main result for $(\mathcal{E}_{p},\mathcal{F}_{p})$ in Section \ref{sec:higher.form}. 
In Section \ref{sec:higher.measures}, we construct multilinear energy measures $\Gamma_{p}^{(l)}$ and show the main result for $\Gamma_{p}$. 

\noindent 
\textbf{Disclosure of AI use.} 
Generative AI tools (ChatGPT and Microsoft Copilot) were used solely to check grammar and references, and to assist with language editing and refinement.

\noindent 
\textbf{Acknowledgments.} This work originated from a question posed by Naotaka Kajino. The author thanks him for the question and for several illuminating discussions. 
The author is supported by JSPS KAKENHI Grant Number JP26K16997.

\section{Korevaar--Schoen energy forms and energy measures}\label{sec:preli} 
Let us recall some results on the Korevaar--Schoen $p$-energy forms from \cite{KS26}. 
Throughout this paper, $1 < p < \infty$, $(X,\metric)$ is a separable metric space with $\#X \ge 2$, $\measure$ is a $\sigma$-finite Borel measure on $X$ with full topological support, and $\bm{k} = \{ k_{r} \}_{r > 0}$ is a family of $[0,\infty]$-valued Borel measurable functions on $X \times X$. 
See \cite[Example 6.3.14]{KS26} for several concrete examples of $\bm{k}$.  
For $u \in L^p(X,\measure)$ and $r > 0$, define 
\[
E_{p,r}(u) \coloneqq \int_{X}\int_{X}\abs{u(x) - u(y)}^{p}k_{r}(x,y)\,\measure(dx)\,\measure(dy). 
\]
Let $\mathcal{F}_{p}$ denote the associated function space given by 
\begin{equation}\label{eq:KSspace}
	\mathcal{F}_{p} \coloneqq \biggl\{ u \in L^{p}(X,\measure) \biggm| (\abs{u}_{\mathcal{F}_{p}})^{p} \coloneqq \sup_{r > 0}E_{p,r}(u) < \infty \biggr\},  
\end{equation}
which coincides with $B_{p,\infty}^{\bm{k}}$ in \cite[Definition 6.3.1]{KS26}. 

Next let us introduce an important condition called the \emph{weak monotonicity property} (see \cite[(WM)$_{p,\bm{k}}$ in Definition 6.3.5]{KS26}) for $\bm{k}$. 
We say that $\bm{k}$ satisfies the weak monotonicity property if and only if there is $C > 0$ such that for any $u \in \mathcal{F}_{p}$, 
\begin{equation}\label{eq:WM}
	 \sup_{r > 0}E_{p,r}(u) \le C\liminf_{r \to 0+}E_{p,r}(u). 
\end{equation} 
If $\bm{k}$ satisfies the weak monotonicity property \eqref{eq:WM}, then the function space $\mathcal{F}_{p}$ equipped with the norm $\norm{\,\cdot\,}_{L^p} + \abs{\,\cdot\,}_{\mathcal{F}_{p}}$ is a reflexive and separable Banach space \cite[Theorem~6.3.6]{KS26}. 

Let us return to setting up the notation. 
Let $\lceil p \rceil \in \mathbb{N}$ be the unique positive integer satisfying $\lceil p \rceil - 1 < p \le \lceil p \rceil$ and set $m(p) \coloneqq \lceil p \rceil - 1 \in \mathbb{N}$.
For $l \in \mathbb{N}$ with $1 \le l \le m(p)$ and $u,v_{1},v_{2},\dots,v_{l} \in \mathcal{F}_{p}$, define 
\begin{align*}
	&E_{p,r}^{(l)}(u; v_{1},v_{2},\dots,v_{l}) \\
	&\coloneqq \int_{X}\int_{X}\Biggl[\abs{u(x) - u(y)}^{p-l}\bigl(\mathrm{sgn}(u(x)-u(y))\bigr)^{l} \\
	&\qquad\qquad  \times \Biggl(\prod_{j=1}^{l}(v_{j}(x) - v_{j}(y))\Biggr)k_{r}(x,y)\Biggr]\,\measure(dx)\,\measure(dy). 
\end{align*} 
Here we define $\mathrm{sgn}(a) \coloneqq \indicator{(0,\infty)}(a) - \indicator{(0,\infty)}(-a)$ for $a \in \mathbb{R}$, where $\indicator{A}$ denotes the indicator function of the set $A$.   
If $p \in \mathbb{N}$, then we also define 
\begin{align*}
&E_{p,r}^{(p)}(v_{1},v_{2},\dots,v_{p}) \\
&\coloneqq \int_{X}\int_{X}\left(\prod_{j=1}^{p}(v_{j}(x) - v_{j}(y))\right)k_{r}(x,y)\, \measure(dx)\,\measure(dy). 
\end{align*}
For ease of notation, we write $E_{p,r}^{(0)} \coloneqq E_{p,r}$, and, when $p \in \mathbb{N}$, we also write $E_{p,r}^{(l)}(u;\,\cdot\,) \coloneqq E_{p,r}^{(p)}(\,\cdot\,)$.  
Also, for $a,b \in \mathbb{R}$, we define $a^{+} \coloneqq \max\{ a,0 \}$, $a \vee b \coloneqq \max\{ a,b \}$ and $a \wedge b \coloneqq \min\{ a,b \}$. 

\begin{lem}\label{lem:-preform.basic}
	Let $r > 0$ and $l \in \mathbb{Z}$ with $0 \le l \le p$. 
	\begin{enumerate}[\rm(a)]
		\item\label{it:-preform.homog} For any $a \in \mathbb{R}$ and any $u \in \mathcal{F}_{p}$, 
		\[
		E_{p,r}^{(l)}(au;\,\cdot\,) = \abs{a}^{p-l}\mathrm{sgn}(a)^{l}E_{p,r}^{(l)}(u;\,\cdot\,). 
		\] 
		\item\label{it:-preform.symform} If $l \ge 1$ and $u \in \mathcal{F}_{p}$, then $E_{p,r}^{(l)}(u;\,\cdot\,) \colon (\mathcal{F}_{p})^{l} \to \mathbb{R}$ is an $l$-multilinear symmetric form. 
		\item For any $u,v_{1},v_{2},\dots,v_{l} \in \mathcal{F}_{p}$, 
			\begin{align}\label{eq:pre-holder}
				&\absBig{E_{p,r}^{(l)}(u; v_{1},v_{2},\dots,v_{l})} \\
				&\le E_{p,r}(u)^{\frac{p-l}{p}}\prod_{j=1}^{l}E_{p,r}(v_{j})^{\frac{1}{p}}
				\le \bigl(\abs{u}_{\mathcal{F}_{p}}\bigr)^{p-l}\prod_{j=1}^{l}\abs{v_{j}}_{\mathcal{F}_{p}}. \notag 
			\end{align}
		\item There exists $C_{p} \in (0,\infty)$ determined solely and explicitly in $p$ such that for any $u,u',v_{1},v_{2},\dots,v_{l} \in \mathcal{F}_{p}$, 
			\begin{align}\label{eq:pre-conti.nonlinear}
				&\absBig{E_{p,r}^{(l)}(u;v_{1},v_{2},\dots,v_{l}) - E_{p,r}^{(l)}(u';v_{1},v_{2},\dots,v_{l})}  \\
				&\le C_{p}\Bigl[E_{p,r}(u)^{\frac{1}{p}} \vee E_{p,r}(u')^{\frac{1}{p}}\Bigr]^{(p-l-1)^{+}}\bigl(E_{p,r}(u-u')^{\frac{1}{p}}\bigr)^{(p-l) \wedge 1} \notag \\
				&\quad \times \Biggl(\prod_{j=1}^{l}E_{p,r}(v_j)^{\frac{1}{p}}\Biggr) \notag \\
				&\le C_{p}\Bigl[\abs{u}_{\mathcal{F}_{p}} \vee \abs{u'}_{\mathcal{F}_{p}}\Bigr]^{(p-l-1)^{+}}\bigl(\abs{u - u'}_{\mathcal{F}_{p}}\bigr)^{(p-l) \wedge 1}\prod_{j=1}^{l}\abs{v_{j}}_{\mathcal{F}_{p}}. \notag 
			\end{align}
	\end{enumerate}
\end{lem}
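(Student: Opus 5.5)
The whole lemma is pointwise in the integrand, so I would treat each part by working with the function
\[
\Phi_{l}(a) \coloneqq \abs{a}^{p-l}\mathrm{sgn}(a)^{l}, \qquad a \in \mathbb{R},
\]
so that $E_{p,r}^{(l)}(u;v_{1},\dots,v_{l}) = \int\int \Phi_{l}(u(x)-u(y))\prod_{j}(v_{j}(x)-v_{j}(y))\,k_{r}\,d\measure\,d\measure$. For $l=p\in\mathbb{N}$, $\Phi_{p}\equiv 1$ and the $u$-dependence disappears, which is consistent with all the stated bounds; in particular (d) is then trivial, since $(p-l)\wedge 1 = 0$ and both sides are compared with $0$.

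For (a), the identity $\Phi_{l}(ab) = \abs{a}^{p-l}\mathrm{sgn}(a)^{l}\Phi_{l}(b)$ holds pointwise, by multiplicativity of $\abs{\cdot}$ and $\mathrm{sgn}$; integrating gives the claim. For (b), symmetry and multilinearity in $(v_{1},\dots,v_{l})$ are clear from the product structure. The only point is that the integral is absolutely convergent, so that linearity of the integral applies. This follows from the Hölder estimate in (c).

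For (c), I would apply the generalized Hölder inequality with respect to the measure $k_{r}(x,y)\,\measure(dx)\,\measure(dy)$. The exponents are $p/(p-l)$ for $\abs{u(x)-u(y)}^{p-l}$ and $p$ for each $\abs{v_{j}(x)-v_{j}(y)}$; these sum correctly since $\frac{p-l}{p}+\frac{l}{p}=1$. This yields the first inequality in \eqref{eq:pre-holder}. The second follows from $E_{p,r}(w)\le(\abs{w}_{\mathcal{F}_{p}})^{p}$, by the definition \eqref{eq:KSspace}.

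For (d), with $l<p$, the key step is the elementary inequality
\[
\abs{\Phi_{l}(a)-\Phi_{l}(b)} \le C_{p}\,(\abs{a}\vee\abs{b})^{(p-l-1)^{+}}\abs{a-b}^{(p-l)\wedge 1},
\]
which I would prove in two cases.
\begin{enumerate}[\rm(i)]
\item If $p-l\ge 1$, then $\Phi_{l}$ is $C^{1}$ with $\abs{\Phi_{l}'(t)} = (p-l)\abs{t}^{p-l-1}$. The mean value theorem gives the inequality with constant $p-l$.
\item If $0<p-l<1$, then $\Phi_{l}$ is either $\abs{t}^{s}$ or $\mathrm{sgn}(t)\abs{t}^{s}$ with $s=p-l\in(0,1)$, and both are $s$-Hölder continuous with constant at most $2^{1-s}$. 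For the odd case with $a,b$ of opposite signs, use $a^{s}+b^{s}\le 2^{1-s}(a+b)^{s}$.
\end{enumerate}
Plugging this into the integrand, I bound $(\abs{a}\vee\abs{b})^{q}\le\abs{a}^{q}+\abs{b}^{q}$ with $q=(p-l-1)^{+}$ and $a=u(x)-u(y)$, $b=u'(x)-u'(y)$. I then apply Hölder with exponents $p/q$, $p/((p-l)\wedge 1)$ and $p$ for each $v_{j}$. The exponents sum to $1$ exactly, since $q+(p-l)\wedge 1 = p-l$ in both cases.

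This produces $\bigl(E_{p,r}(u)^{q/p}+E_{p,r}(u')^{q/p}\bigr)\,E_{p,r}(u-u')^{((p-l)\wedge1)/p}\prod_{j}E_{p,r}(v_{j})^{1/p}$. The sum is at most $2\bigl[E_{p,r}(u)^{1/p}\vee E_{p,r}(u')^{1/p}\bigr]^{q}$, and the factor $2$ is absorbed into $C_{p}$. The last inequality again follows from $E_{p,r}\le\abs{\cdot}_{\mathcal{F}_{p}}^{p}$.

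The main care needed is in this elementary Hölder/mean-value inequality, especially the sign bookkeeping for odd $l$ in the case $p-l<1$. With $q=0$ the first Hölder factor is simply omitted. Everything else is routine.
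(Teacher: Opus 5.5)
Your proposal is correct and follows the paper's proof: (a) and (b) are read off pointwise from the definition, (c) is H\"older's inequality for the measure $k_{r}(x,y)\,\measure(dx)\,\measure(dy)$, and (d) combines H\"older with the elementary continuity estimate for $t \mapsto \abs{t}^{p-l}\mathrm{sgn}(t)^{l}$; the paper only cites this estimate from \cite[Lemma~6.3.7]{KS26}, whereas you prove it by hand, and the version displayed in the paper's proof is in fact misprinted (its two sides are homogeneous of degrees $q$ and $q-1$). Two small points: for $p-l=1$ with $l$ even, $\Phi_{l}(t)=\abs{t}$ is Lipschitz but not $C^{1}$, which still gives the bound; and with your convention $\Phi_{p}\equiv 1$ the multiplicativity argument for (a) does not cover $l=p$ --- indeed (a) is false there for $a=0$, since the left side is $E_{p,r}^{(p)}$ while the right side vanishes, so this is a defect of the statement (glossed over by the paper's ``obvious'' as well) rather than of your argument.
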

\begin{proof}
	The assertions \ref{it:-preform.homog} and \ref{it:-preform.symform} are obvious from the definition and \eqref{eq:pre-holder} immediately follows from H\"{o}lder's inequality. 
	The estimate \eqref{eq:pre-conti.nonlinear} follows from the following continuity estimate on $a \mapsto \abs{a}^{q}\mathrm{sgn}(a)$ for $q > 1$ (see \cite[Lemma~6.3.7]{KS26})
	\begin{equation}\label{eq:powerestimate}
		\absbig{\abs{a}^{q} - \abs{b}^{q}} 
		\le \absbig{a^{q} - b^{q}} \le 2\bigl(1 \vee (q-1)\bigr)(a \vee b)^{(q-2)^{+}}\abs{a-b}^{(q-1) \wedge 1}, 
	\end{equation}
	so the proof is completed. 
\end{proof}

\begin{lem}\label{lem:subsec}
	Assume that the weak monotonicity property \eqref{eq:WM} holds. 
	Let $\{ \widetilde{r}_{n} \}_{n \ge 1}$ be a sequence in $(0,\infty)$ such that $\lim_{n \to \infty}\widetilde{r}_{n} = 0$. 
	Then there exists a subsequence $\{ r_{n} \}_{n}$ of $\{ \widetilde{r}_{n} \}_{n}$ such that, for any $l \in \mathbb{Z}$ with $0 \le l \le p$ and any $u,v_{1},v_{2},\dots,v_{l} \in \mathcal{F}_{p}$, $\bigl\{ E_{p,r_{n}}^{(l)}(u; v_{1},v_{2},\dots,v_{l}) \bigr\}_{n}$ is convergent in $\mathbb{R}$. 
\end{lem}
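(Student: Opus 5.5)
The plan is a diagonal argument over a countable dense set, followed by extension to all of $\mathcal{F}_{p}$ using the uniform-in-$r$ bounds of Lemma~\ref{lem:-preform.basic}.

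Under \eqref{eq:WM}, $\mathcal{F}_{p}$ with the norm $\norm{\,\cdot\,}_{L^p} + \abs{\,\cdot\,}_{\mathcal{F}_{p}}$ is separable \cite[Theorem~6.3.6]{KS26}. So we fix a countable dense subset $D \subset \mathcal{F}_{p}$. The set of tuples $(l,u,v_{1},\dots,v_{l})$ with $0 \le l \le p$ and $u,v_{j} \in D$ is countable; enumerate it as $\{\tau_{k}\}_{k \ge 1}$. For each such tuple, \eqref{eq:pre-holder} gives
\[
\absbig{E_{p,r}^{(l)}(u;v_{1},\dots,v_{l})} \le (\abs{u}_{\mathcal{F}_{p}})^{p-l}\prod_{j=1}^{l}\abs{v_{j}}_{\mathcal{F}_{p}} \quad \text{for all } r>0,
\]
so the real sequence indexed by $n$ is bounded. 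By Bolzano--Weierstrass, we extract nested subsequences along which the values at $\tau_{1},\tau_{2},\dots$ successively converge. Taking the diagonal subsequence gives $\{r_{n}\}_{n}$ along which $E_{p,r_{n}}^{(l)}(u;v_{1},\dots,v_{l})$ converges for every tuple with entries in $D$. When $p \in \mathbb{N}$ and $l = p$, there is no $u$-slot, and the same argument applies.

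For general $u,v_{1},\dots,v_{l} \in \mathcal{F}_{p}$, we show that the sequence is Cauchy. Given $\delta>0$, choose $u',v_{j}' \in D$ close to $u,v_{j}$ in $\abs{\,\cdot\,}_{\mathcal{F}_{p}}$ (the seminorm is dominated by the norm). We split the difference $E^{(l)}_{p,r}(u;v) - E^{(l)}_{p,r}(u';v')$ into two parts. The first part is the change in $u$, estimated by \eqref{eq:pre-conti.nonlinear}; it is bounded by $C_{p}M^{(p-l-1)^{+}}\abs{u-u'}_{\mathcal{F}_{p}}^{(p-l)\wedge 1}\prod_{j}\abs{v_{j}}_{\mathcal{F}_{p}}$, where $M$ bounds $\abs{u}_{\mathcal{F}_{p}}$ and $\abs{u'}_{\mathcal{F}_{p}}$. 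The second part is the change in the $v_{j}$'s, one at a time. By multilinearity (Lemma~\ref{lem:-preform.basic}\ref{it:-preform.symform}) each replacement is a single term, which \eqref{eq:pre-holder} bounds by $(\abs{u'}_{\mathcal{F}_{p}})^{p-l}\abs{v_{j}-v_{j}'}_{\mathcal{F}_{p}}\prod_{i\neq j}(\cdots)$. For $l=0$, only the first part is needed, again via \eqref{eq:pre-conti.nonlinear} (or directly via \eqref{eq:powerestimate}).

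All of these bounds are uniform in $r$, because only $\abs{\,\cdot\,}_{\mathcal{F}_{p}}=\sup_{r}E_{p,r}^{1/p}$ appears. Hence $\sup_{n}\absbig{E^{(l)}_{p,r_{n}}(u;v)-E^{(l)}_{p,r_{n}}(u';v')} < \delta$. The standard $3\delta$ argument then shows that $\{E^{(l)}_{p,r_{n}}(u;v)\}_{n}$ is Cauchy, and therefore convergent.

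The main point to check is the uniformity in $r$ of the nonlinear continuity estimate in the $u$-variable. This is exactly what the last line of \eqref{eq:pre-conti.nonlinear} provides, with exponent $(p-l)\wedge 1>0$ when $l<p$. When $l=p$ the $u$-slot is absent, so no continuity in $u$ is needed; the dependence on the $v_{j}$'s is purely multilinear and bounded. So the extension step goes through in all cases.
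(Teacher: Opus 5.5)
Your proposal is correct and follows essentially the same route as the paper. Both arguments extract a diagonal subsequence over a countable dense subset of $\mathcal{F}_{p}$, using the $r$-uniform bound \eqref{eq:pre-holder}, and then telescope via \eqref{eq:pre-conti.nonlinear} in the $u$-slot and multilinearity with \eqref{eq:pre-holder} in the $v_{j}$-slots, with constants independent of $r$, to conclude that the sequence is Cauchy.
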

\begin{proof}
	Fix a countable dense subset $\mathcal{C}$ of $\mathcal{F}_{p}$.  
	In view of \eqref{eq:pre-holder}, a standard diagonal procedure yields a subsequence $\{ r_{n} \}_{n}$ such that the sequence $\bigl\{ E_{p,r_n}^{(l)}(u; v_{1},v_{2},\dots,v_{l}) \bigr\}_{n}$ is convergent for all $0 \le l \le p$ and $u,v_{1},v_{2},\dots,v_{l} \in \mathcal{C}$.
	
	Let $0 < \varepsilon < 1$ and $u,v_{1},v_{2},\dots,v_{l} \in \mathcal{F}_{p}$. 
	Choose $\hat{u},\hat{v}_{1},\hat{v}_{2},\dots,\hat{v}_{l} \in \mathcal{C}$ so that 
	\[
	\max\bigl\{ \abs{u - \hat{u}}_{\mathcal{F}_{p}}, \abs{v_{1} - \hat{v}_{1}}_{\mathcal{F}_{p}}, \abs{v_{2} - \hat{v}_{2}}_{\mathcal{F}_{p}}, \dots, \abs{v_{l} - \hat{v}_{l}}_{\mathcal{F}_{p}} \bigr\} 
	< \varepsilon. 
	\]
	Then one can see that $\bigl\{ E_{p,r_n}^{(l)}(u; v_{1},v_{2},\dots,v_{l})  \bigr\}_{n}$ is convergent in $\mathbb{R}$ as follows: 
	For any $r > 0$, by \eqref{eq:pre-conti.nonlinear} and \eqref{eq:pre-holder}, 
	\begin{align*}
		&\absBig{E_{p,r}^{(l)}(u; v_{1},v_{2},\dots,v_{l}) - E_{p,r}^{(l)}(\hat{u}; \hat{v}_{1},\hat{v}_{2},\dots,\hat{v}_{l})} \\
		&\quad \le \absBig{E_{p,r}^{(l)}(u; v_{1},v_{2},\dots,v_{l}) - E_{p,r}^{(l)}(\hat{u}; v_{1},v_{2},\dots,v_{l})} \\
		&\quad\quad + \absBig{E_{p,r}^{(l)}(\hat{u}; v_{1},v_{2},\dots,v_{l}) - E_{p,r}^{(l)}(\hat{u}; \hat{v}_{1},v_{2},\dots,v_{l})} \\
		&\quad\quad + \absBig{E_{p,r}^{(l)}(\hat{u}; \hat{v}_{1},v_{2},\dots,v_{l}) - E_{p,r}^{(l)}(\hat{u}; \hat{v}_{1},\hat{v}_{2},\dots,v_{l})} \\
		&\quad\quad + \cdots + \absBig{E_{p,r}^{(l)}(\hat{u}; \hat{v}_{1},\hat{v}_{2},\dots,\hat{v}_{l-1},v_{l}) - E_{p,r}^{(l)}(\hat{u}; \hat{v}_{1},\hat{v}_{2},\dots,\hat{v}_{l})}  \\
		&\quad \le C\varepsilon^{(p-l) \wedge 1}, 
	\end{align*} 
	where $C \in (0,\infty)$ is a constant independent of $r$. 
	Using this estimate, we see that for any $n,n'$, 
	\begin{align*}
		&\absBig{E_{p,r_n}^{(l)}(u; v_{1},v_{2},\dots,v_{l}) - E_{p,r_{n'}}^{(l)}(u; v_{1},v_{2},\dots,v_{l})} \\
		&\quad \le 2C\varepsilon^{(p-l) \wedge 1} + \absBig{E_{p,r_n}^{(l)}(\hat{u}; \hat{v}_{1},\hat{v}_{2},\dots,\hat{v}_{l}) - E_{p,r_{n'}}^{(l)}(\hat{u}; \hat{v}_{1},\hat{v}_{2},\dots,\hat{v}_{l})}. 
	\end{align*}
	Hence 
	\[
	\limsup_{n \wedge n' \to \infty}\absBig{E_{p,r_n}^{(l)}(u; v_{1},v_{2},\dots,v_{l}) - E_{p,r_{n'}}^{(l)}(u; v_{1},v_{2},\dots,v_{l})} 
	\le 2C\varepsilon^{(p-l) \wedge 1}, 
	\]
	which means that $\bigl\{ E_{p,r_{n}}^{(l)}(u; v_{1},v_{2},\dots,v_{l}) \bigr\}_{n}$ is a Cauchy sequence in $\mathbb{R}$. 
\end{proof}

\begin{defn}\label{defn:KS}
	Assume that the weak monotonicity property \eqref{eq:WM} holds and that $\{ \widetilde{r}_{n} \}_{n \ge 1}$ is a sequence in $(0,\infty)$ converging to $0$. 
	Let $\{ r_{n} \}_{n}$ be a subsequence of $\{ \widetilde{r}_{n} \}_{n \ge 1}$ as given in Lemma \ref{lem:subsec}. 
	For $l \in \mathbb{Z}$ with $0 \le l \le p$, define $\mathcal{E}_{p}^{(l)} \colon (\mathcal{F}_{p})^{l+1} \to \mathbb{R}$ by 
	\begin{equation}\label{eq:defn-multiform}
		\mathcal{E}_{p}^{(l)}(u; v_{1},v_{2},\dots,v_{l}) \coloneqq \lim_{n\to\infty}E_{p,r_{n}}^{(l)}(u; v_{1},v_{2},\dots,v_{l}) 
	\end{equation}
	for $u,v_{1},v_{2},\dots,v_{l} \in \mathcal{F}_{p}$. 
	We write $\mathcal{E}_{p} \coloneqq \mathcal{E}_{p}^{(0)}$ and call $(\mathcal{E}_{p},\mathcal{F}_{p})$ the \emph{$\bm{k}$-Korevaar--Schoen $p$-energy form (on $(X,\measure)$) along $\{ r_{n} \}_{n}$} \cite[Definition 6.3.9]{KS26}. 
	From the weak monotonicity property \eqref{eq:WM}, the norm $\bigl(\norm{\,\cdot\,}_{L^p}^{p} + \mathcal{E}_{p}\bigr)^{1/p}$ is comparable with that of $\mathcal{F}_{p}$. 
\end{defn}
\begin{rmk}\label{rmk:choice}
	The case of $l = 0$ corresponds to the construction of Korevaar--Schoen $p$-energy form denoted by $(\mathcal{E}_{p}^{\bm{k}}, B_{p,\infty}^{\bm{k}})$ in \cite[Eq. (6.3.8) in Theorem 6.3.8]{KS26}. 
	Note that this construction depends on the choice of the sequence $\{ \widetilde{r}_{n} \}_{n}$ and its subsequence $\{ r_{n} \}_{n}$. 
\end{rmk}

The following proposition is immediate from Lemma \ref{lem:-preform.basic} and \eqref{eq:defn-multiform}. 
\begin{prop}\label{prop:-form.basic}
	Assume that the weak monotonicity property \eqref{eq:WM} holds and that $\{ \widetilde{r}_{n} \}_{n \ge 1}$ is a sequence in $(0,\infty)$ converging to $0$. 
	Let $\{ r_{n} \}_{n}$ be a subsequence of $\{ \widetilde{r}_{n} \}_{n \ge 1}$ as given in Lemma \ref{lem:subsec}, let $l \in \mathbb{Z}$ with $0 \le l \le p$ and let $\mathcal{E}_{p}^{(l)}$ be as defined in \eqref{eq:defn-multiform}. 
	\begin{enumerate}[\rm(a)]
		\item For any $a \in \mathbb{R}$ and any $u \in \mathcal{F}_{p}$, 
		\[
		\mathcal{E}_{p}^{(l)}(au;\,\cdot\,) = \abs{a}^{p-l}\mathrm{sgn}(a)^{l}\mathcal{E}_{p}^{(l)}(u;\,\cdot\,). 
		\] 
		\item If $l \ge 1$, then, for any $u \in \mathcal{F}_{p}$, $\mathcal{E}_{p}^{(l)}(u;\,\cdot\,) \colon (\mathcal{F}_{p})^{l} \to \mathbb{R}$ is an $l$-multilinear symmetric form. 
		\item For any $u,v_{1},v_{2},\dots,v_{l} \in \mathcal{F}_{p}$, 
			\begin{equation}\label{eq:holder}
				\absBig{\mathcal{E}_{p}^{(l)}(u; v_{1},v_{2},\dots,v_{l})}  
				\le \bigl(\abs{u}_{\mathcal{F}_{p}}\bigr)^{p-l}\prod_{j=1}^{l}\abs{v_{j}}_{\mathcal{F}_{p}}. 
			\end{equation} 
		\item For any $u,u',v_{1},v_{2},\dots,v_{l} \in \mathcal{F}_{p}$, 
			\begin{align}\label{eq:conti.nonlinear}
				&\absBig{\mathcal{E}_{p}^{(l)}(u;v_{1},v_{2},\dots,v_{l}) - \mathcal{E}_{p}^{(l)}(u';v_{1},v_{2},\dots,v_{l})} \\
				&\quad \le C_{p}\Bigl[\abs{u}_{\mathcal{F}_{p}} \vee \abs{u'}_{\mathcal{F}_{p}}\Bigr]^{(p-l-1)^{+}}\bigl(\abs{u - u'}_{\mathcal{F}_{p}}\bigr)^{(p-l) \wedge 1}\prod_{j=1}^{l}\abs{v_{j}}_{\mathcal{F}_{p}}, \notag 
			\end{align}
			where $C_{p}$ is the same constant as in \eqref{eq:pre-conti.nonlinear}.
	\end{enumerate}
\end{prop}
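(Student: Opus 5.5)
The plan is to pass to the limit $n \to \infty$ along the subsequence $\{ r_{n} \}_{n}$ of Lemma \ref{lem:subsec} in each assertion of Lemma \ref{lem:-preform.basic}, applied with $r = r_{n}$. Since every inequality and identity in Lemma \ref{lem:-preform.basic} holds with constants independent of $r$, each passes to the limit. By \eqref{eq:defn-multiform}, every expression $E_{p,r_{n}}^{(l)}(\,\cdot\,;\,\cdot\,)$ converges to $\mathcal{E}_{p}^{(l)}(\,\cdot\,;\,\cdot\,)$. No further compactness or diagonal argument is needed, because Lemma \ref{lem:subsec} already gives convergence simultaneously for all $l$ and all arguments in $\mathcal{F}_{p}$.

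For (a), I fix $a \in \mathbb{R}$ and $u \in \mathcal{F}_{p}$, noting that $au \in \mathcal{F}_{p}$. Part \ref{it:-preform.homog} of Lemma \ref{lem:-preform.basic} gives $E_{p,r_{n}}^{(l)}(au;v_{1},\dots,v_{l}) = \abs{a}^{p-l}\mathrm{sgn}(a)^{l}E_{p,r_{n}}^{(l)}(u;v_{1},\dots,v_{l})$ for each $n$, and letting $n \to \infty$ gives the claim. For (b), multilinearity and symmetry are finitely many linear identities among values of $E_{p,r_{n}}^{(l)}(u;\,\cdot\,)$, for example
\[
E_{p,r_{n}}^{(l)}(u; \alpha v_{1} + \beta v_{1}', v_{2},\dots) = \alpha E_{p,r_{n}}^{(l)}(u; v_{1},v_{2},\dots) + \beta E_{p,r_{n}}^{(l)}(u; v_{1}',v_{2},\dots),
\]
together with invariance under permutations of $(v_{1},\dots,v_{l})$. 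Since limits are linear, these identities persist for $\mathcal{E}_{p}^{(l)}(u;\,\cdot\,)$.

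For (c) and (d), I use the second, $r$-uniform inequalities in \eqref{eq:pre-holder} and \eqref{eq:pre-conti.nonlinear}. Their right-hand sides involve only $\abs{\,\cdot\,}_{\mathcal{F}_{p}}$, which does not depend on $r$. Taking $n \to \infty$ on the left-hand sides, and using continuity of the absolute value and of subtraction, gives \eqref{eq:holder} and \eqref{eq:conti.nonlinear} with the same constant $C_{p}$.

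There is no genuine obstacle here. The only point needing care is that the limits exist for all the arguments that appear (such as $au$, $u'$ and linear combinations of the $v_{j}$), and this is exactly what Lemma \ref{lem:subsec} guarantees for arbitrary elements of $\mathcal{F}_{p}$.
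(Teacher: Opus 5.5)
Your proposal is correct and is exactly the paper's argument: the paper simply states that the proposition is immediate from Lemma \ref{lem:-preform.basic} and \eqref{eq:defn-multiform}. That is, one applies each part of Lemma \ref{lem:-preform.basic} with $r = r_{n}$ and lets $n \to \infty$, using the $r$-independent bounds for (c) and (d) and the fact that Lemma \ref{lem:subsec} gives convergence for all arguments in $\mathcal{F}_{p}$ at once.
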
 

Let us conclude this section by recalling from \cite[Section 6.4]{KS26} the construction of the energy measures associated with the Korevaar--Schoen energy forms. 
To this end, we need to impose another condition for $\bm{k}$. 
We say that $\bm{k} = \{ k_{r} \}_{r > 0}$ is \emph{asymptotically local} (see \cite[Definition 6.3.5]{KS26}) if and only if there exists $\{ \delta(r) \}_{r > 0} \subseteq (0,\infty)$ such that $\lim_{r \to 0}\delta(r) = 0$ and 
\begin{equation}\label{eq:asymplocal}
	\lim_{r \to 0}\int_{X}\int_{X \setminus B(y,\delta(r))}k_{r}(x,y)\,\measure(dx)\,\measure(dy) = 0. 
\end{equation}
We also need to introduce several function spaces.
We use the following notation: 
\begin{align*}
	C(X) &\coloneqq \{ u \colon X \to \mathbb{R} \mid \text{$u$ is continuous} \}, \\
	C_{b}(X) &\coloneqq \{ u \in C(X) \mid \text{$u$ is bounded} \}, \\
	C_{c}(X) &\coloneqq \{ u \in C(X) \mid \text{the support of $u$ is compact} \},
\end{align*} 
and define $\mathcal{D}_{p,b}$ as the closure of $\mathcal{F}_{p} \cap C_{b}(X)$ in $\mathcal{F}_{p}$.  
(The symbol $\mathcal{D}_{p,\infty}^{\bm{k},b}$ is used to denote $\mathcal{D}_{p,b}$ in \cite[Definition 6.4.4]{KS26}.)

Now we summarize a characterization of the associated energy measures $\Gamma_{p}\langle u \rangle$ in the following proposition. 
See \cite[Section 6.4]{KS26} for further details. 
\begin{prop}[{\cite[Theorems 6.4.3, 6.4.5 and 6.4.8]{KS26}}]\label{prop:em-constr}
	Assume that $(X,d)$ is locally compact and that the weak monotonicity property \eqref{eq:WM} holds. 
	Let $\{ r_{n} \}_{n}$ be a sequence in $(0,\infty)$ converging to $0$ as given in Lemma \ref{lem:subsec} so that the $\bm{k}$-Korevaar--Schoen $p$-energy form $(\mathcal{E}_{p},\mathcal{F}_{p})$ along $\{ r_{n} \}_{n}$ is defined by \eqref{eq:defn-multiform} with $l=0$. 
	In addition, assume that $\bm{k}$ is asymptotically local and that $\mathcal{F}_{p} \cap C_{c}(X)$ is dense in $C_{c}(X)$ with respect to the uniform norm $\norm{\,\cdot\,}_{\sup}$. 
	Then for any $u \in \mathcal{D}_{p,b}$, there exists a unique positive Radon measure $\Gamma_{p}\langle u \rangle$ on $X$, which we call the \emph{$p$-energy measure of $u$ associated with $(\mathcal{E}_{p},\mathcal{F}_{p})$}, such that for all $\varphi \in C_{c}(X)$, 
	\begin{align}\label{eq:repformula-another}
		&\int_{X}\varphi\,d\Gamma_{p}\langle u \rangle \\
		&= \lim_{n \to \infty}\int_{X}\int_{X}\abs{u(x) - u(y)}^{p}\varphi(x)k_{r_n}(x,y)\,\measure(dx)\,\measure(dy). \notag 
	\end{align}
	Furthermore, $\Gamma_{p}\langle u \rangle(X) \le \mathcal{E}_{p}(u) < \infty$. 
\end{prop}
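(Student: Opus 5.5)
The statement is a citation of \cite[Theorems 6.4.3, 6.4.5 and 6.4.8]{KS26}. To reconstruct it, I would treat the right-hand side of \eqref{eq:repformula-another} as the limit of the linear functionals
\[
I_{n}(\varphi) \coloneqq \int_{X}\int_{X}\abs{u(x)-u(y)}^{p}\varphi(x)k_{r_n}(x,y)\,\measure(dx)\,\measure(dy), \qquad \varphi \in C_{c}(X).
\]
The plan has four steps: show that $\lim_{n}I_{n}(\varphi)$ exists, show that the limit is a positive linear functional on $C_{c}(X)$, apply the Riesz--Markov representation theorem to get the measure, and read off the mass bound.

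\textbf{Positivity and a uniform bound.} Each $I_{n}$ is linear and positive, and $\abs{I_{n}(\varphi)} \le \norm{\varphi}_{\sup}E_{p,r_n}(u) \le \norm{\varphi}_{\sup}(\abs{u}_{\mathcal{F}_{p}})^{p}$. Hence any limit functional is positive, satisfies $\abs{I(\varphi)} \le \norm{\varphi}_{\sup}\mathcal{E}_{p}(u)$, and continuity in $\varphi$ with respect to $\norm{\cdot}_{\sup}$ is uniform in $n$. By density of $\mathcal{F}_{p}\cap C_{c}(X)$ in $C_{c}(X)$, it therefore suffices to prove convergence for $\varphi \in \mathcal{F}_{p}\cap C_{c}(X)$. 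Likewise, the estimate \eqref{eq:pre-conti.nonlinear} with $l=0$, applied to the weighted kernel $\varphi(x)k_{r}(x,y)$, shows that $u\mapsto I_{n}(\varphi)$ is locally H\"older on $\mathcal{F}_{p}$ uniformly in $n$. It thus suffices to take $u \in \mathcal{F}_{p}\cap C_{b}(X)$, since these are dense in $\mathcal{D}_{p,b}$ by definition.

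\textbf{Convergence (the main obstacle).} For bounded continuous $u$ and $\varphi \in \mathcal{F}_{p}\cap C_{c}(X)$, the natural idea is to express $I_{n}(\varphi)$ through the energies $E_{p,r_n}$, whose limits are known to exist by Lemma \ref{lem:subsec}. The tool is asymptotic locality \eqref{eq:asymplocal}. It lets us replace $\varphi(x)$ by $\varphi(y)$, or by a symmetrized version, at vanishing cost, because on the region $d(x,y)<\delta(r_n)$ we have $\abs{\varphi(x)-\varphi(y)}\le \omega_{\varphi}(\delta(r_n))$, where $\omega_{\varphi}$ is the modulus of continuity of $\varphi$. The complementary far region contributes at most $2^{p}\norm{u}_{\sup}^{p}\norm{\varphi}_{\sup}$ times the vanishing integral in \eqref{eq:asymplocal}.

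The remaining task is to realize $\int \abs{u(x)-u(y)}^{p}\varphi\,k_{r_n}$ as a combination of quantities that converge. I would first try a layer-cake or Leibniz-type identity. Writing $\varphi = \varphi^{+}-\varphi^{-}$ and approximating by truncations, one compares $E_{p,r_n}\bigl(u\,\psi\bigr)$ with $\int\abs{u(x)-u(y)}^{p}\psi(x)^{p}k_{r_n}$ for $\psi \in \mathcal{F}_{p}\cap C_{c}(X)$. The error terms involve $\abs{\psi(x)-\psi(y)}$, which is small on the near-diagonal region by locality, and are controlled there by H\"older's inequality. The resulting expressions converge along $\{r_{n}\}$ by Lemma \ref{lem:subsec} applied to $u\psi$.

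This identification is the step I expect to be genuinely hard. If it fails, the fallback is abstract: use Lemma \ref{lem:subsec}-type diagonal extraction over a countable dense set of $\varphi$'s. However, the subsequence is already fixed, so the argument really must rely on \cite{KS26}'s Theorems 6.4.3 and 6.4.5, and I would cite those for the convergence.

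\textbf{Conclusion.} Once the limit $I(\varphi)$ exists, it is a positive linear functional on $C_{c}(X)$ for the locally compact space $X$. Riesz--Markov then gives a unique positive Radon measure $\Gamma_{p}\langle u\rangle$ representing it. Finally, the bound $\abs{I(\varphi)}\le\norm{\varphi}_{\sup}\mathcal{E}_{p}(u)$ gives $\Gamma_{p}\langle u\rangle(X)=\sup_{0\le\varphi\le1}I(\varphi)\le\mathcal{E}_{p}(u)$.
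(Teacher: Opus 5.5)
Your peripheral steps are fine. The bound $\abs{I_n(\varphi)}\le\norm{\varphi}_{\sup}E_{p,r_n}(u)$ justifies restricting to $\varphi\in\mathcal{F}_p\cap C_c(X)$, and the weighted form of \eqref{eq:pre-conti.nonlinear} justifies restricting to $u\in\mathcal{F}_p\cap C_b(X)$. Once the limit exists, Riesz--Markov together with $\Gamma_p\langle u\rangle(X)=\sup_{0\le\varphi\le 1}I(\varphi)\le\mathcal{E}_p(u)$ finishes the proof.

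However, the existence of $\lim_n I_n(\varphi)$ along the \emph{fixed} sequence $\{r_n\}$ is the whole content of the statement. The paper offers no argument of its own, only the citation. You do not supply this step either: your fallback is to cite the result being proved.

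Moreover, the mechanism you sketch cannot work. From $u\psi(x)-u\psi(y)=\psi(x)(u(x)-u(y))+u(y)(\psi(x)-\psi(y))$, the discrepancy between $E_{p,r}(u\psi)$ and $\iint\abs{u(x)-u(y)}^{p}\psi(x)^{p}k_{r}$ is governed by terms like $\abs{u(y)}^{p}\abs{\psi(x)-\psi(y)}^{p}$ and $\abs{\psi(x)}^{p-1}\abs{u(x)-u(y)}^{p-1}\abs{u(y)}\abs{\psi(x)-\psi(y)}$. Pointwise smallness of $\abs{\psi(x)-\psi(y)}$ near the diagonal is useless here, because $\iint_{\{d(x,y)<\delta(r)\}}k_r$ typically diverges as $r\to0$. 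That smallness only helps against a full factor $\abs{u(x)-u(y)}^{p}$. H\"older bounds these terms only by quantities like $E_{p,r}(u)^{(p-1)/p}E_{p,r}(\psi)^{1/p}$, which do not vanish. Indeed, already for smooth functions on $\mathbb{R}^{N}$ one has $\int\abs{\nabla(u\psi)}^{p}\neq\int\abs{\psi}^{p}\abs{\nabla u}^{p}$ in general, so the two limits genuinely differ.

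The missing idea is to use the first-order forms $E_{p,r}^{(1)}$, which converge along $\{r_n\}$ by Lemma~\ref{lem:subsec} with $l=1$. They should be combined so that the terms of order $\abs{u(x)-u(y)}^{p-1}\abs{\varphi(x)-\varphi(y)}$ cancel exactly.

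Let $\norm{u}_{\sup}\le M$ and $\varphi\in\mathcal{F}_p\cap C_c(X)$, and put $p'\coloneqq p/(p-1)$ and $c_p\coloneqq (p')^{1-p}$. Then $u\varphi$ and $\abs{u}^{p'}$ lie in $\mathcal{F}_p$. Writing $\iint$ for integration against $\measure(dx)\,\measure(dy)$, one has
\[
\begin{aligned}
&E_{p,r}^{(1)}(u;u\varphi)-c_{p}E_{p,r}^{(1)}\bigl(\abs{u}^{p'};\varphi\bigr)-\iint\abs{u(x)-u(y)}^{p}\varphi(x)\,k_{r}\\
&\quad=\iint g\bigl(u(x),u(y)\bigr)\bigl(\varphi(x)-\varphi(y)\bigr)\,k_{r},
\end{aligned}
\]
where $g(a,b)\coloneqq\abs{a-b}^{p-1}\mathrm{sgn}(a-b)\,b-c_{p}\absbig{\abs{a}^{p'}-\abs{b}^{p'}}^{p-1}\mathrm{sgn}\bigl(\abs{a}^{p'}-\abs{b}^{p'}\bigr)$.

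Since $(p'-1)(p-1)=1$, a first-order Taylor expansion of $t\mapsto\abs{t}^{p'}$ (treating small $\abs{b}$ separately) gives two bounds. First, $\abs{g(a,b)}\le\varepsilon_M(\eta)\abs{a-b}^{p-1}$ whenever $\abs{a},\abs{b}\le M$ and $\abs{a-b}<\eta$, with $\varepsilon_M(\eta)\to0$ as $\eta\to0$. Second, $\abs{g(a,b)}\le C_M\abs{a-b}^{p-1}$ for all $\abs{a},\abs{b}\le M$.

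Now split the right-hand side into three regions:
\begin{enumerate}[(i)]
\item On $\{d(x,y)\ge\delta(r)\}$, the integrand is bounded, so this part vanishes by \eqref{eq:asymplocal}.
\item On $\{d(x,y)<\delta(r),\,\abs{u(x)-u(y)}<\eta\}$, H\"older gives the bound $\varepsilon_M(\eta)E_{p,r}(u)^{(p-1)/p}E_{p,r}(\varphi)^{1/p}$.
\item On $\{d(x,y)<\delta(r),\,\abs{u(x)-u(y)}\ge\eta\}$, the bound is $C_M\eta^{-1}\omega_\varphi(\delta(r))E_{p,r}(u)$.
\end{enumerate}
Letting $r=r_n\to0$ and then $\eta\to0$ shows the right-hand side tends to $0$. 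Hence $\lim_n I_n(\varphi)$ exists, and with this step inserted the rest of your outline goes through.
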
 

\section{Higher-order differentiability of energy forms}\label{sec:higher.form} 
This section is devoted to investigating higher-order (Fr\'{e}chet) differentiability of the $\bm{k}$-Korevaar--Schoen energy forms. 
For definitions and fundamental results concerning the differentiability of maps between Banach spaces, we refer the reader to \cite{AD71} or \cite[Chapter 8]{Jos}. 
In this paper, we only encounter the case where the target space is $\mathbb{R}$. 
For a $C^{m}$-map $E \colon V \to \mathbb{R}$, where $V$ is a (real) Banach space, we use $D^{m}E$ to denote the $m$-th derivative of $E$. 
Note that $D^{m}E(v)$, $v \in V$, is a (real) $m$-multilinear form defined on $V^{m}$. 
 
In this section, we always assume that $\bm{k} = \{ k_r \}_{r > 0}$ satisfies the weak monotonicity property \eqref{eq:WM} and $\{ r_{n} \}_{n}$ is a sequence in $(0,\infty)$ converging to $0$ as given in Lemma \ref{lem:subsec} so that we have the $\bm{k}$-Korevaar--Schoen $p$-energy form $(\mathcal{E}_{p},\mathcal{F}_{p})$ on $(X,\measure)$ along $\{ r_{n} \}_{n}$ defined by \eqref{eq:defn-multiform} with $l=0$. 

As in the case of $\ell^p$ and $L^p$ norms in \cite[Theorem 8]{Sun67}, the following lemma by Banach--Saks \cite{BS30} is a key ingredient.
\begin{lem}\label{lem:BS}
	There exists $C > 0$ such that for any $a,b \in \mathbb{R}$, 
	\begin{equation}\label{eq:taylor}
		\absBigg{\abs{a+b}^{p} - \sum_{l=0}^{\lfloor p \rfloor}\binom{p}{l}\abs{a}^{p-l}\mathrm{sgn}(a)^{l}b^{l}} 
		\le C\abs{b}^{p}. 
	\end{equation}
	Here $\lfloor p \rfloor \in \mathbb{N}$ is the integer part of $p$ and $\binom{p}{l}$ denotes the generalized binomial coefficient. 
\end{lem}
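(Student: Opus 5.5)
The inequality \eqref{eq:taylor} is invariant under the reflection $(a,b) \mapsto (-a,-b)$. Both sides are also $p$-homogeneous under $(a,b) \mapsto (ta,tb)$ with $t>0$, since $\abs{ta}^{p-l}(tb)^{l} = t^{p}\abs{a}^{p-l}b^{l}$. If $a = 0$, every term with $l \ge 1$ carries the factor $\abs{a}^{p-l}$ with $p-l>0$ (for integer $p$ the term $l=p$ has coefficient $\binom{p}{p}b^{p}\mathrm{sgn}(0)^p$), so the left-hand side reduces to at most $\abs{b}^{p}(1+1)$. It therefore suffices to treat $a = 1$ and prove that
\[
f(t) \coloneqq \abs{1+t}^{p} - \sum_{l=0}^{\lfloor p \rfloor}\binom{p}{l}t^{l}
\]
satisfies $\abs{f(t)} \le C\abs{t}^{p}$ for all $t \in \mathbb{R}$. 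Indeed, for $a \neq 0$ write $b = at$. Then $\abs{a+b}^{p} = \abs{a}^{p}\abs{1+t}^{p}$ and $\abs{a}^{p-l}\mathrm{sgn}(a)^{l}b^{l} = \abs{a}^{p}t^{l}$, because $\mathrm{sgn}(a)^{l}a^{l} = \abs{a}^{l}$. The general statement then follows by multiplying through by $\abs{a}^{p}$.

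I will split into two regimes. For $\abs{t} \ge 1/2$, the function $f$ is continuous and, by the triangle inequality, $\abs{f(t)} \le (1+\abs{t})^{p} + \sum_{l}\binom{p}{l}\abs{t}^{l}$. Since $\lfloor p \rfloor \le p$, each $\abs{t}^{l} \le 2^{p}\abs{t}^{p}$ when $\abs{t} \ge 1/2$, and $(1+\abs{t})^{p} \le 3^{p}\abs{t}^{p}$. This gives the bound with an explicit constant. For $\abs{t} < 1/2$, the map $t \mapsto (1+t)^{p}$ is real-analytic and equals its binomial series $\sum_{l \ge 0}\binom{p}{l}t^{l}$. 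If $p \in \mathbb{N}$, then $f \equiv 0$ on this interval and there is nothing to show.

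If $p \notin \mathbb{N}$, write $N = \lfloor p \rfloor$. Then $f(t)$ is the Taylor remainder of order $N$, and I plan to use the Lagrange form: $f(t) = \binom{p}{N+1}(1+\xi)^{p-N-1}t^{N+1}$ for some $\xi$ between $0$ and $t$. Since $\abs{\xi} < 1/2$, the factor $(1+\xi)^{p-N-1}$ is bounded by $2^{N+1-p}$. Moreover $\abs{t}^{N+1} \le \abs{t}^{p}$ because $N+1 > p$ and $\abs{t} < 1$. Combining the two regimes yields a constant $C$ depending only on $p$.

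The only delicate point is the local regime near $t=0$, where the remainder must vanish at order $\abs{t}^{p}$ rather than $\abs{t}^{N}$. This is handled by the observation that the Taylor expansion runs exactly to order $\lfloor p \rfloor$, and the next term has order $\lfloor p\rfloor+1 > p$. The treatment of the sign conventions at $a=0$ and for negative $a$ is routine bookkeeping.
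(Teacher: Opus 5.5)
Your proof is correct and follows essentially the same route as the paper: you normalize via $t = b/a$ to a one-variable function, control it near $t=0$ by Taylor's theorem (the remainder is of order $\lfloor p \rfloor + 1 > p$, or vanishes when $p \in \mathbb{N}$), and control it for large $\abs{t}$ by a crude growth bound. The only differences are cosmetic: you give explicit bounds in both regimes and treat $a = 0$ explicitly, which the paper leaves implicit, whereas the paper argues that $\abs{x}^{-p}\bigl(\abs{1+x}^{p} - \sum_{l}\binom{p}{l}x^{l}\bigr)$ extends continuously to $x=0$ and stays bounded as $\abs{x}\to\infty$.
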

\begin{proof}
	This was proved in \cite[Th\'{e}or\`{e}me I]{BS30}. Let us recall the proof for reader's convenience. 
	Define 
	\[
	\varphi(x) \coloneqq \frac{1}{\abs{x}^{p}}\Biggl(\abs{1+x}^{p} - \sum_{l=0}^{\lfloor p \rfloor}\binom{p}{l}x^{l}\Biggr), \qquad x \in \mathbb{R} \setminus \{ 0 \}. 
	\]
	By Taylor's theorem, we know that 
	\[
	\abs{1+x}^{p} - \sum_{l=0}^{\lfloor p \rfloor}\binom{p}{l}x^{l} = o(\abs{x}^{p}) \qquad (x \to 0).  
	\]
	Hence the function $\varphi$ turns out to be a continuous function on $\mathbb{R}$ by putting $\varphi(0) \coloneqq 0$. 
	Furthermore, we easily see that 
	\[
	\lim_{\abs{x} \to \infty}\abs{\varphi(x)} 
	= \lim_{\abs{x} \to \infty}\absBigg{\absbigg{1+\frac{1}{x}}^{p} - \sum_{l=0}^{\lfloor p \rfloor}\binom{p}{l}\frac{x^{l}}{\abs{x}^{p}}} 
	\le 2. 
	\]
	Therefore, $C \coloneqq \sup_{x \in \mathbb{R}}\abs{\varphi(x)} < \infty$. 
	We obtain \eqref{eq:taylor} by putting $x = b/a$. 
\end{proof}

\begin{thm}\label{thm:-form.diffble} 
	The map $\mathcal{E}_{p} \colon \mathcal{F}_{p} \to [0,\infty)$ is of class $C^{m(p)}$, with its derivatives given by
	\[
	D^{l}\mathcal{E}_{p}(u) = p(p-1)\cdots(p-l+1)\mathcal{E}_{p}^{(l)}(u;\,\cdot\,), \qquad u \in \mathcal{F}_{p}, 1 \le l \le m(p). 
	\] 
	Furthermore, if $p$ is an even number, i.e., $p/2 \in \mathbb{N}$, then $\mathcal{E}_{p} \colon \mathcal{F}_{p} \to [0,\infty)$ is of class $C^{\infty}$ with $D^{p}\mathcal{E}_{p} = p!\mathcal{E}_{p}^{(p)}$ and $D^{l}\mathcal{E}_{p} = 0$ for $l > p$.
\end{thm}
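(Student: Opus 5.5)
The plan is to establish a Taylor expansion of $\mathcal{E}_{p}$ at each $u$ with remainder $O(\abs{v}_{\mathcal{F}_{p}}^{p})$, identify the derivatives, and then use continuity of $u \mapsto \mathcal{E}_{p}^{(l)}(u;\,\cdot\,)$ in operator norm to get the $C^{m(p)}$ regularity. For fixed $r>0$ and $u,v \in \mathcal{F}_{p}$, apply Lemma \ref{lem:BS} pointwise with $a = u(x)-u(y)$ and $b = v(x)-v(y)$, multiply by $k_{r}(x,y)$ and integrate. This gives
\[
\absBigg{E_{p,r}(u+v) - \sum_{l=0}^{\lfloor p \rfloor}\binom{p}{l}E_{p,r}^{(l)}(u;v,\dots,v)} \le C E_{p,r}(v) \le C\bigl(\abs{v}_{\mathcal{F}_{p}}\bigr)^{p},
\]
where for $l=p\in\mathbb{N}$ the term is $E_{p,r}^{(p)}(v,\dots,v)$. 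All terms are finite by \eqref{eq:pre-holder}. Letting $r = r_{n}\to 0$ and using Lemma \ref{lem:subsec} and \eqref{eq:defn-multiform}, the same inequality holds with $E_{p,r}$ replaced by $\mathcal{E}_{p}$ and $E_{p,r}^{(l)}$ by $\mathcal{E}_{p}^{(l)}$. Note $\binom{p}{l}l! = p(p-1)\cdots(p-l+1)$.

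Next I would prove differentiability by induction on $l$, using the standard fact (e.g.\ \cite{AD71}) that a map is $C^{m}$ once its candidate derivatives are continuous in operator norm and are successive Fréchet derivatives. The plan is to show, for $1\le l\le m(p)$, that $F_{l-1}(u)\coloneqq \mathcal{E}_{p}^{(l-1)}(u;\,\cdot\,)$, valued in bounded symmetric $(l-1)$-linear forms, is Fréchet differentiable with derivative $(p-l+1)\mathcal{E}_{p}^{(l)}(u;\,\cdot\,)$.

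The key pointwise estimate, on which the whole proof hinges, is a first-order Taylor bound for $g_{q}(a)\coloneqq\abs{a}^{q}\mathrm{sgn}(a)^{j}$ with $q=p-l+1>1$ (the parity of $j$ matches the derivative formula):
\[
\absbig{g_{q}(a+b)-g_{q}(a)-q\abs{a}^{q-1}\mathrm{sgn}(a)^{j+1}b} \le C\bigl(\abs{a}+\abs{b}\bigr)^{(q-2)^{+}}\abs{b}^{q\wedge 2}.
\]
This follows from the mean value theorem combined with \eqref{eq:powerestimate} applied to $g_{q}'$. I expect it to be the main obstacle, particularly the case $1<q<2$, where $g_{q}'$ is only Hölder continuous of order $q-1$. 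Once it is in hand, multiplying by $\prod_{j}(w_{j}(x)-w_{j}(y))k_{r}$, integrating, and applying Hölder's inequality with exponents $p/(q-2)^{+}$, $p/(q\wedge 2)$ and $p$ (or $p/(p-l+1)$, $p/(l-1)$ in the appropriate form) bounds the remainder of $F_{l-1}$ in operator norm by
\[
C\bigl(\abs{u}_{\mathcal{F}_{p}}+\abs{v}_{\mathcal{F}_{p}}\bigr)^{(q-2)^{+}}\abs{v}_{\mathcal{F}_{p}}^{q\wedge 2},
\]
uniformly in $r$, and this passes to the limit along $r_{n}$. Since $q\wedge 2>1$, this is $o(\abs{v}_{\mathcal{F}_{p}})$, which gives the differentiability. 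Continuity of $u\mapsto \mathcal{E}_{p}^{(m(p))}(u;\,\cdot\,)$ in operator norm follows directly from \eqref{eq:conti.nonlinear}, so $\mathcal{E}_{p}$ is $C^{m(p)}$ with the stated formulas; the Taylor expansion above is consistent with these formulas.

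For $p$ even, $\abs{a+b}^{p}=(a+b)^{p}$, so the expansion is exact with zero remainder. Then $\mathcal{E}_{p}(u+v)$ is a polynomial in $v$, and $\mathcal{E}_{p}^{(l)}(u;\,\cdot\,)$ is the restriction of the symmetric $p$-linear form $\mathcal{E}_{p}^{(p)}$ with $p-l$ slots filled by $u$, because $\abs{a}^{p-l}\mathrm{sgn}(a)^{l}=a^{p-l}$ in this case. A bounded continuous polynomial is $C^{\infty}$, with $D^{p}\mathcal{E}_{p}=p!\,\mathcal{E}_{p}^{(p)}$ constant, and hence $D^{l}\mathcal{E}_{p}=0$ for $l>p$.
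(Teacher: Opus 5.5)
Your proof is correct, but it is organized differently from the paper's.

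\textbf{The paper's route.} The paper uses only the Banach--Saks expansion. Passing \eqref{eq:taylor} to the limit gives the single remainder bound \eqref{eq:remainder}, of order $C\mathcal{E}_{p}(v)\le C\abs{v}_{\mathcal{F}_{p}}^{p}$ and uniform in $u$. The paper combines this with the operator-norm continuity of $u\mapsto\mathcal{E}_{p}^{(l)}(u;\,\cdot\,)$ from \eqref{eq:conti.nonlinear} and applies the converse of Taylor's theorem. This yields the $C^{m(p)}$ property and $D^{l}\mathcal{E}_{p}(u)=l!\binom{p}{l}\mathcal{E}_{p}^{(l)}(u;\,\cdot\,)$ in one step.

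\textbf{Your route.} You differentiate level by level, showing that $u\mapsto\mathcal{E}_{p}^{(l-1)}(u;\,\cdot\,)$ is Fr\'{e}chet differentiable with derivative $v\mapsto(p-l+1)\mathcal{E}_{p}^{(l)}(u;v,\,\cdot\,)$. Your first-order bound for $\abs{a}^{q}\mathrm{sgn}(a)^{j}$ does hold. It follows at once from the mean value theorem and \eqref{eq:powerestimate} applied to $\abs{a}^{q-1}\mathrm{sgn}(a)^{j+1}$, so the case $1<q<2$ is no harder than the others. Your H\"{o}lder bookkeeping is also right, since the exponents satisfy $(q-2)^{+}+(q\wedge 2)+(l-1)=p$. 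The resulting bound is uniform in $r$, so it survives the limit along $\{r_{n}\}_{n}$.

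\textbf{Comparison.} Your scheme avoids the converse Taylor theorem and gives an explicit modulus of differentiability at each order, namely $(\abs{u}_{\mathcal{F}_{p}}+\abs{v}_{\mathcal{F}_{p}})^{(p-l-1)^{+}}\abs{v}_{\mathcal{F}_{p}}^{(p-l+1)\wedge 2}$. The cost is a separate pointwise estimate at every level. Note also that in your scheme the Banach--Saks expansion from your first paragraph is never used, except as a consistency check. The paper's argument is shorter, and its single remainder estimate is reused directly in Lemma~\ref{lem:-integral.linear}. There, the converse Taylor theorem applied along arbitrary subsequences shows that the limits do not depend on the subsequence.

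\textbf{Even $p$.} Your identification of $\mathcal{E}_{p}$ with the bounded homogeneous polynomial $u\mapsto\mathcal{E}_{p}^{(p)}(u,\dots,u)$ is essentially the paper's observation. The paper phrases it as the linearity of $u\mapsto E_{p,r}^{(p-1)}(u;\,\cdot\,)$.
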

\begin{proof}
	By \eqref{eq:taylor}, for any $u,v \in \mathcal{F}_{p}$ and any $r > 0$, 
	\[
	\absBigg{E_{p,r}(u+v) - E_{p,r}(u) - \sum_{l=1}^{\lfloor p \rfloor}\binom{p}{l}E_{p,r}^{(l)}(u; v,v,\dots,v)} 
	\le CE_{p,r}(v). 
	\] 
	Taking $r \to 0$ along $\{ r_n \}_{n}$ in this estimate, we obtain 
	\begin{equation}\label{eq:remainder}
		\absBigg{\mathcal{E}_{p}(u+v) - \mathcal{E}_{p}(u) - \sum_{l=1}^{\lfloor p \rfloor}\binom{p}{l}\mathcal{E}_{p}^{(l)}(u; v,v,\dots,v)} 
		\le C\mathcal{E}_{p}(v). 
	\end{equation}
	This remainder estimate together with Proposition \ref{prop:-form.basic} and a version of the converse of Taylor's theorem \cite[Theorem on p.~348]{AD71} implies that $\mathcal{E}_{p}$ is of class $C^{m(p)}$ with, for $u \in \mathcal{F}_{p}$ and $1 \le l \le m(p)$, 
	\[
	D^{l}\mathcal{E}_{p}(u) 
	= l!\binom{p}{l}\mathcal{E}_{p}^{(l)}(u;\,\cdot\,)
	= p(p-1)\cdots(p-l+1)\mathcal{E}_{p}^{(l)}(u;\,\cdot\,). 
	\]
	
	In the rest of the proof, assume that $p$ is an even number. 
	Then a direct computation shows that 
	\begin{align*}
		&E_{p,r}^{(p-1)}(u; v_{1},v_{2},\dots,v_{p-1}) - E_{p,r}^{(p-1)}(u'; v_{1},v_{2},\dots,v_{p-1}) \\
		&= E_{p,r}^{(p-1)}(u - u'; v_{1},v_{2},\dots,v_{p-1}) \\
		&= \int_{X}\int_{X}\Biggl[\bigl((u(x) - u(y)) - (u'(x) - u'(y))\bigr) \\
		&\qquad\qquad \times \Biggl(\prod_{j=1}^{p-1}(v_{j}(x) - v_{j}(y))\Biggr)k_{r}(x,y)\Biggr]\,\measure(dx)\,\measure(dy) \\
		&= E_{p,r}^{(p)}(u-u',v_{1},v_{2},\dots,v_{p-1})
	\end{align*} 
	for any $r > 0$ and any $u,u',v_{1},v_{2},\dots,v_{p-1} \in \mathcal{F}_{p}$. 
	Taking $r \to 0$ along $\{ r_n \}_{n}$ in this equality, we obtain 
	\begin{align*}
		&\mathcal{E}_{p}^{(p-1)}(u; v_{1},v_{2},\dots,v_{p-1}) - \mathcal{E}_{p}^{(p-1)}(u'; v_{1},v_{2},\dots,v_{p-1})  \\
		&= \mathcal{E}_{p}^{(p)}(u-u',v_{1},v_{2},\dots,v_{p-1}), 
	\end{align*}
	which implies that  
	\[
	D^{p}\mathcal{E}_{p}(u) 
	= p!D\mathcal{E}_{p}^{(p-1)}(u;\,\cdot\,) 
	= p!\mathcal{E}_{p}^{(p)}, \qquad u \in \mathcal{F}_{p}.  
	\]
	Since this derivative does not depend on $u$, we conclude that $D^{l}\mathcal{E}_{p} = 0$ for any $l > p$.  
\end{proof} 

\section{Higher-order differentiability of energy measures}\label{sec:higher.measures} 
As in the previous section, we next investigate the differentiability of the $\bm{k}$-Korevaar--Schoen energy measures. 
In order to follow the approach in the proof of Theorem \ref{thm:-form.diffble}, under certain assumptions, we consider the following $l$-multilinear symmetric form for each $\varphi \in C_{c}(X)$: 
\begin{align*}
	I^{(l)}(\varphi) 
	&\coloneqq \lim_{n \to \infty}\int_{X}\int_{X}\Biggl[\abs{u(x) - u(y)}^{p-l}\bigl(\mathrm{sgn}(u(x)-u(y))\bigr)^{l} \\
	& \qquad \times \Biggl(\prod_{j=1}^{l}(v_{j}(x) - v_{j}(y))\Biggr)\varphi(x)k_{r_{n}}(x,y)\Biggr]\,\measure(dx)\,\measure(dy). 
\end{align*}  
(See $\mathcal{I}_{p}^{(l)}[u; v_{1},v_{2},\dots,v_{l}](\varphi)$ below for the precise description.)
We will see that $I^{(l)}(\varphi)$ defines a bounded linear functional on $C_{c}(X)$ and hence determines the unique signed measure $\Gamma_{p}\langle u;v_{1},v_{2},\dots,v_{l} \rangle$ through the Riesz--Markov--Kakutani representation theorem. 
These associated singed measures will be used to describe higher-order derivatives of the Korevaar--Schoen energy measures. 

To carry out the strategy described above, we first make some preparations.  
Throughout this section, in addition to the setting specified at the beginning of Section \ref{sec:preli}, we assume that $(X,d)$ is locally compact. 
For $r > 0$, $l \in \mathbb{Z}$ with $1 \le l \le m(p)$, $u,v_{1},v_{2},\dots,v_{l} \in L^p(X,\measure)$ and $\varphi \in C_{b}(X)$, define  
\begin{align*}
	&I_{p,r}^{(l)}[u; v_{1},v_{2},\dots,v_{l}](\varphi) \\
	&\coloneqq \int_{X}\int_{X}\Biggl[(u(x) - u(y))^{p-l}\bigl(\mathrm{sgn}(u(x)-u(y))\bigr)^{l} \\
	&\qquad\qquad \times \Biggl(\prod_{j=1}^{l}(v_{j}(x) - v_{j}(y))\Biggr)\varphi(x)k_{r}(x,y)\Biggr]\,\measure(dx)\,\measure(dy),  
\end{align*}
and 
\[
I_{p,r}[u](\varphi)  
\coloneqq \int_{X}\int_{X}\abs{u(x) - u(y)}^{p}\varphi(x)k_{r}(x,y)\,\measure(dx)\,\measure(dy).
\] 
If $p \in \mathbb{N}$, then we also define 
\begin{align*}
	&I_{p,r}^{(p)}[v_{1},v_{2},\dots,v_{p}](\varphi) \\
	&\coloneqq \int_{X}\int_{X}\Biggl(\prod_{j=1}^{p}(v_{j}(x) - v_{j}(y))\Biggr)\varphi(x)k_{r}(x,y)\,\measure(dx)\,\measure(dy). 
\end{align*}
For ease of notation, we write $I_{p,r}^{(0)} \coloneqq I_{p,r}$, and, when $p \in \mathbb{N}$, we also write $I_{p,r}^{(l)}(u;\,\cdot\,) \coloneqq I_{p,r}^{(p)}(\,\cdot\,)$.  

The same arguments as Lemmas \ref{lem:-preform.basic} and \ref{lem:subsec} show the following lemmas. 
\begin{lem}\label{lem:-preem.basic}
	Let $r > 0$, $l \in \mathbb{Z}$ with $0 \le l \le p$ and $\varphi \in C_{b}(X)$. 
	\begin{enumerate}[\rm(a)]
		\item For any $a \in \mathbb{R}$ and any $u \in \mathcal{F}_{p}$, 
		\[
		I_{p,r}^{(l)}[au;\,\cdot\,](\varphi) = \abs{a}^{p-l}\mathrm{sgn}(a)^{l}I_{p,r}^{(l)}[u;\,\cdot\,](\varphi).  
		\]
		\item If $l \ge 1$ and $u \in \mathcal{F}_{p}$, then $I_{p,r}^{(l)}[u;\,\cdot\,](\varphi) \colon (\mathcal{F}_{p})^{l} \to \mathbb{R}$ is an $l$-multilinear symmetric form. 
		\item For any $u,v_{1},v_{2},\dots,v_{l} \in \mathcal{F}_{p}$, 
			\begin{align}\label{eq:-integral.pre-holder}
				\absBig{I_{p,r}^{(l)}[u; v_{1},v_{2},\dots,v_{l}](\varphi)} 
				&\le I_{p,r}[u](\abs{\varphi})^{\frac{p-l}{p}}\prod_{j=1}^{l}I_{p,r}[v_{j}](\abs{\varphi})^{\frac{1}{p}}  \\
				&\le \norm{\varphi}_{\sup}\bigl(\abs{u}_{\mathcal{F}_{p}}\bigr)^{p-l}\prod_{j=1}^{l}\abs{v_{j}}_{\mathcal{F}_{p}}. \notag 
			\end{align}
		\item For any $u,u',v_{1},v_{2},\dots,v_{l} \in \mathcal{F}_{p}$, 
			\begin{align}\label{eq:-integral.pre-conti.nonlinear}
				&\absBig{I_{p,r}^{(l)}[u;v_{1},v_{2},\dots,v_{l}](\varphi) - I_{p,r}^{(l)}[u';v_{1},v_{2},\dots,v_{l}](\varphi)} \\
				&\quad \le C_{p}\Bigl[I_{p,r}[u](\abs{\varphi})^{\frac{1}{p}} \vee I_{p,r}[u'](\abs{\varphi})^{\frac{1}{p}}\Bigr]^{(p-l-1)^{+}} \notag \\
				&\quad\quad  \times \bigl(I_{p,r}[u-u'](\abs{\varphi})^{\frac{1}{p}}\bigr)^{(p-l) \wedge 1} \prod_{j=1}^{l}I_{p,r}[v_j](\abs{\varphi})^{\frac{1}{p}} \notag \\
				&\quad \le C_{p}\norm{\varphi}_{\sup}\Bigl[\abs{u}_{\mathcal{F}_{p}} \vee \abs{u'}_{\mathcal{F}_{p}}\Bigr]^{(p-l-1)^{+}}\bigl(\abs{u - u'}_{\mathcal{F}_{p}}\bigr)^{(p-l) \wedge 1}\prod_{j=1}^{l}\abs{v_{j}}_{\mathcal{F}_{p}}. \notag 
			\end{align}
			where $C_p$ is the same constant as in \eqref{eq:pre-conti.nonlinear}.
	\end{enumerate}
\end{lem}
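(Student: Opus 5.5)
The plan is to repeat the proof of Lemma \ref{lem:-preform.basic}, working with the weighted measure $\abs{\varphi(x)}k_{r}(x,y)\,\measure(dx)\,\measure(dy)$ on $X \times X$ in place of $k_{r}(x,y)\,\measure(dx)\,\measure(dy)$. One remark on notation first: in the definition of $I_{p,r}^{(l)}$ the factor $(u(x)-u(y))^{p-l}$ should be read as $\abs{u(x)-u(y)}^{p-l}$, consistent with $E_{p,r}^{(l)}$.

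Items (a) and (b) are pointwise statements about the integrand and are immediate from the definitions. Since $\abs{au(x)-au(y)}^{p-l}\mathrm{sgn}(au(x)-au(y))^{l} = \abs{a}^{p-l}\mathrm{sgn}(a)^{l}\abs{u(x)-u(y)}^{p-l}\mathrm{sgn}(u(x)-u(y))^{l}$, (a) follows. The integrand is a product of the factors $(v_{j}(x)-v_{j}(y))$, each linear in $v_{j}$, which gives multilinearity, and symmetry holds because the product is invariant under permuting the $v_{j}$. Both properties pass through the integral once absolute integrability is known, and that is supplied by (c).

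For (c), I bound $\abs{\varphi(x)}$ by writing it as $\abs{\varphi(x)}^{\frac{p-l}{p}}\prod_{j}\abs{\varphi(x)}^{\frac{1}{p}}$. I then apply the generalized H\"older inequality with exponents $\frac{p}{p-l},p,\dots,p$ on $X\times X$ with the measure $k_{r}(x,y)\,\measure(dx)\,\measure(dy)$. This gives the first inequality, $\abs{I^{(l)}_{p,r}[u;v_{1},\dots,v_{l}](\varphi)} \le I_{p,r}[u](\abs{\varphi})^{\frac{p-l}{p}}\prod_{j}I_{p,r}[v_{j}](\abs{\varphi})^{\frac{1}{p}}$. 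When $l=p$ the factor involving $u$ is simply absent. The second inequality follows from $I_{p,r}[w](\abs{\varphi}) \le \norm{\varphi}_{\sup}E_{p,r}(w) \le \norm{\varphi}_{\sup}(\abs{w}_{\mathcal{F}_{p}})^{p}$, because the exponents of $\norm{\varphi}_{\sup}$ add up to $\frac{p-l}{p}+\frac{l}{p}=1$.

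For (d), set $a=u(x)-u(y)$, $b=u'(x)-u'(y)$ and $q=p-l$. The integrand of the difference is then $\bigl(\abs{a}^{q}\mathrm{sgn}(a)^{l}-\abs{b}^{q}\mathrm{sgn}(b)^{l}\bigr)\prod_{j}(v_{j}(x)-v_{j}(y))\varphi(x)k_{r}$.
\begin{itemize}
\item If $l=p$, the difference vanishes.
\item If $q\ge 1$, I use \eqref{eq:powerestimate}. This needs care, because \eqref{eq:powerestimate} is stated for $q>1$ and for the odd power $\abs{a}^{q}\mathrm{sgn}(a)$, whereas here the sign factor is $\mathrm{sgn}^{l}$. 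If $l$ is odd the function is $\abs{a}^{q}\mathrm{sgn}(a)$; if $l$ is even it is $\abs{a}^{q}$. In both cases the difference is bounded by $C_{p}(\abs{a}\vee\abs{b})^{(q-1)^{+}}\abs{a-b}^{q\wedge1}$. When $q=1$ both functions are Lipschitz with constant $1$.
\item If $0<q<1$, the function $t\mapsto\abs{t}^{q}$ is $q$-H\"older. For the signed version $\abs{t}^{q}\mathrm{sgn}(t)$ one still has the bound $2^{1-q}\abs{a-b}^{q}$, so the same form of estimate holds.
\end{itemize}
Next I use $(\abs{a}\vee\abs{b})^{(q-1)^{+}}\le \abs{a}^{(q-1)^{+}}+\abs{b}^{(q-1)^{+}}$. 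I split the integral accordingly and apply H\"older with exponents $\frac{p}{(q-1)^{+}}$, $\frac{p}{q\wedge1}$ and $p$ (repeated $l$ times), again against the weight $\abs{\varphi}k_{r}$. These exponents are conjugate because $(q-1)^{+}+q\wedge 1+l=p$. This gives the first inequality of \eqref{eq:-integral.pre-conti.nonlinear}, after absorbing the factor $2$ into $C_{p}$. The second inequality again follows from $I_{p,r}[w](\abs{\varphi})\le\norm{\varphi}_{\sup}(\abs{w}_{\mathcal{F}_{p}})^{p}$, with the exponents summing to $1$.

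The only step that needs attention is this case analysis for the pointwise power estimate, namely making sure the constant is exactly the $C_{p}$ of \eqref{eq:pre-conti.nonlinear}. The cleanest way to guarantee this is to observe that the proof of Lemma \ref{lem:-preform.basic} uses the identical pointwise bound, so the same constant carries over verbatim.
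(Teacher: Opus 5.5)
Your proposal is correct and is the paper's own argument: the paper proves this lemma by rerunning the proof of Lemma~\ref{lem:-preform.basic} with the weight $\abs{\varphi(x)}k_{r}(x,y)$ in place of $k_{r}(x,y)$, namely the definitions for (a)--(b), H\"older's inequality for (c), and the power estimate \eqref{eq:powerestimate} plus H\"older for (d), and your case analysis in $q=p-l$ simply spells that power estimate out. The only caveat comes from the statement itself: for $l=p$ the convention $I_{p,r}^{(p)}[u;\,\cdot\,]\coloneqq I_{p,r}^{(p)}[\,\cdot\,]$ removes the dependence on $u$, so (a) then holds only when $\mathrm{sgn}(a)^{p}=1$, and your pointwise argument for (a) is valid only for $l<p$.
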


\begin{lem}\label{lem:-integral.subsec}
	Assume that the weak monotonicity property \eqref{eq:WM} holds. 
	Let $\varphi \in C_{b}(X)$ and  $\{ \widetilde{r}_{n} \}_{n \ge 1}$ be a sequence in $(0,\infty)$ such that $\lim_{n \to \infty}\widetilde{r}_{n} = 0$. 
	Then there exists a subsequence $\{ r_{n} \}_{n}$ of $\{ \widetilde{r}_{n} \}_{n}$ such that, for any $0 \le l \le m(p)$ and any $u,v_{1},v_{2},\dots,v_{l} \in \mathcal{F}_{p}$, the sequence $\bigl\{ I_{p,r_{n}}^{(l)}[u; v_{1},v_{2},\dots,v_{l}](\varphi) \bigr\}_{n}$ is convergent in $\mathbb{R}$. 
\end{lem}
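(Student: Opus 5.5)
The argument will follow the proof of Lemma~\ref{lem:subsec}, with the estimates of Lemma~\ref{lem:-preem.basic} in place of those of Lemma~\ref{lem:-preform.basic}. Fix $\varphi \in C_{b}(X)$. By the weak monotonicity property \eqref{eq:WM}, $\mathcal{F}_{p}$ is a separable Banach space under $\norm{\,\cdot\,}_{L^p} + \abs{\,\cdot\,}_{\mathcal{F}_{p}}$ \cite[Theorem~6.3.6]{KS26}, so we may fix a countable dense subset $\mathcal{C} \subseteq \mathcal{F}_{p}$. Then the set of tuples $(l,u,v_{1},\dots,v_{l})$ with $0 \le l \le m(p)$ and $u,v_{j} \in \mathcal{C}$ is countable. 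For each such tuple, \eqref{eq:-integral.pre-holder} gives the bound
\[
\absBig{I_{p,r}^{(l)}[u;v_{1},\dots,v_{l}](\varphi)} \le \norm{\varphi}_{\sup}\bigl(\abs{u}_{\mathcal{F}_{p}}\bigr)^{p-l}\prod_{j=1}^{l}\abs{v_{j}}_{\mathcal{F}_{p}},
\]
which is uniform in $r$. By Bolzano--Weierstrass and a diagonal argument we extract a subsequence $\{ r_{n} \}_{n}$ of $\{ \widetilde{r}_{n} \}_{n}$ along which all these countably many real sequences converge.

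Next we extend convergence to arbitrary $u,v_{1},\dots,v_{l} \in \mathcal{F}_{p}$. Given $0<\varepsilon<1$, choose $\hat{u},\hat{v}_{j} \in \mathcal{C}$ that are $\varepsilon$-close to $u$ and $v_{j}$ in $\abs{\,\cdot\,}_{\mathcal{F}_{p}}$. We telescope as in the proof of Lemma~\ref{lem:subsec}. The first term replaces $u$ by $\hat{u}$ and is bounded by \eqref{eq:-integral.pre-conti.nonlinear}. Each remaining term replaces one $v_{j}$ by $\hat{v}_{j}$; using the multilinearity in Lemma~\ref{lem:-preem.basic}(b), it equals $I_{p,r}^{(l)}$ evaluated with $v_{j}-\hat{v}_{j}$ in the $j$-th slot, and is bounded by \eqref{eq:-integral.pre-holder}. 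Altogether this yields
\[
\absBig{I_{p,r}^{(l)}[u;v_{1},\dots,v_{l}](\varphi) - I_{p,r}^{(l)}[\hat{u};\hat{v}_{1},\dots,\hat{v}_{l}](\varphi)} \le C\norm{\varphi}_{\sup}\varepsilon^{(p-l)\wedge 1},
\]
with $C$ depending on $p$, $\abs{u}_{\mathcal{F}_{p}}$ and the $\abs{v_{j}}_{\mathcal{F}_{p}}$, but not on $r$. Hence the $\limsup_{n\wedge n'\to\infty}$ of $\abs{I_{p,r_{n}}^{(l)}[u;\ldots](\varphi) - I_{p,r_{n'}}^{(l)}[u;\ldots](\varphi)}$ is at most $2C\norm{\varphi}_{\sup}\varepsilon^{(p-l)\wedge1}$. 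Letting $\varepsilon \to 0$ shows the sequence is Cauchy, and therefore convergent.

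The only point needing care is that the right-hand sides of \eqref{eq:-integral.pre-holder} and \eqref{eq:-integral.pre-conti.nonlinear} must be controlled uniformly in $r$. This holds because $I_{p,r}[w](\abs{\varphi}) \le \norm{\varphi}_{\sup}E_{p,r}(w) \le \norm{\varphi}_{\sup}(\abs{w}_{\mathcal{F}_{p}})^{p}$ by the definition \eqref{eq:KSspace}. These estimates are exactly those of Lemma~\ref{lem:-preform.basic}, obtained by H\"older's inequality with respect to the measure $\abs{\varphi(x)}k_{r}(x,y)\,\measure(dx)\,\measure(dy)$ together with \eqref{eq:powerestimate}. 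Since $(p-l)\wedge 1>0$ for $l \le m(p) < p$, the error genuinely vanishes as $\varepsilon \to 0$.
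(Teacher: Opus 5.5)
Your proposal is correct and follows essentially the same route as the paper, which proves this lemma by repeating the argument of Lemma~\ref{lem:subsec} with the estimates of Lemma~\ref{lem:-preem.basic} in place of those of Lemma~\ref{lem:-preform.basic}. That argument is diagonal extraction on a countable dense set via the $r$-uniform bound \eqref{eq:-integral.pre-holder}, followed by the telescoping Cauchy estimate from \eqref{eq:-integral.pre-conti.nonlinear} and \eqref{eq:-integral.pre-holder}.
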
 

\begin{defn}\label{defn:-integral.KS}
	Assume that the weak monotonicity property \eqref{eq:WM} holds and that $\{ \widetilde{r}_{n} \}_{n \ge 1}$ is a sequence in $(0,\infty)$ converging to $0$. 
	Let $\varphi \in C_{b}(X)$ and $\{ r_{n} \}_{n}$ be a subsequence of $\{ \widetilde{r}_{n} \}_{n \ge 1}$ as given in Lemma \ref{lem:subsec}. (Note that the sequence $\{ r_{n} \}_{n}$ may depend on $\varphi$.)
	For $0 \le l \le m(p)$, define $\mathcal{I}_{p}^{(l)}[\,\cdot\,](\varphi) \colon (\mathcal{F}_{p})^{l+1} \to \mathbb{R}$ by 
	\begin{equation}\label{eq:defn.integralform}
		\mathcal{I}_{p}^{(l)}[u; v_{1},v_{2},\dots,v_{l}](\varphi) \coloneqq \lim_{n\to\infty}I_{p,r_{n}}^{(l)}[u; v_{1},v_{2},\dots,v_{l}](\varphi) 
	\end{equation} 
	for $u,v_{1},v_{2},\dots,v_{l} \in \mathcal{F}_{p}$. 
	We also write $\mathcal{I}_{p}[u](\varphi) \coloneqq \mathcal{I}_{p}^{(0)}[u](\varphi)$. 
\end{defn}

Similarly to Proposition \ref{prop:-form.basic}, the following proposition is immediate from the definition in \eqref{eq:defn.integralform} and Lemma \ref{lem:-preem.basic}.  
\begin{prop}\label{prop:-integral.basic}
	Assume that the weak monotonicity property \eqref{eq:WM} holds and that $\{ \widetilde{r}_{n} \}_{n \ge 1}$ is a sequence in $(0,\infty)$ converging to $0$. 
	Let $\varphi \in C_{b}(X)$ and $\{ r_{n} \}_{n}$ be a subsequence of $\{ \widetilde{r}_{n} \}_{n \ge 1}$ as given in Lemma \ref{lem:subsec}, let $l \in \mathbb{Z}$ with $0 \le l \le p$ and let $\mathcal{I}_{p}^{(l)}[\,\cdot\,](\varphi) $ be as defined in \eqref{eq:defn.integralform}. 
	\begin{enumerate}[\rm(a)]
		\item For any $a \in \mathbb{R}$ and any $u \in \mathcal{F}_{p}$, 
		\[
		\mathcal{I}_{p}^{(l)}[au;\,\cdot\,](\varphi) = \abs{a}^{p-l}\mathrm{sgn}(a)^{l}\mathcal{I}_{p}^{(l)}[u;\,\cdot\,](\varphi). 
		\]
		\item If $l \ge 1$, then, for any $u \in \mathcal{F}_{p}$, $\mathcal{I}_{p}^{(l)}[u;\,\cdot\,](\varphi) \colon (\mathcal{F}_{p})^{l} \to \mathbb{R}$ is an $l$-multilinear symmetric form. 
		\item For any $u,v_{1},v_{2},\dots,v_{l} \in \mathcal{F}_{p}$, 
			\begin{align}\label{eq:-integral.holder}
				\absBig{\mathcal{I}_{p}^{(l)}[u; v_{1},v_{2},\dots,v_{l}](\varphi)} 
				&\le \mathcal{I}_{p}[u](\abs{\varphi})^{\frac{p-l}{p}}\prod_{j=1}^{l}\mathcal{I}_{p}[v_{j}](\abs{\varphi})^{\frac{1}{p}} \\
				&\le \norm{\varphi}_{\sup}\bigl(\abs{u}_{\mathcal{F}_{p}}\bigr)^{p-l}\prod_{j=1}^{l}\abs{v_{j}}_{\mathcal{F}_{p}}. \notag 
			\end{align} 
		\item For any $u,u',v_{1},v_{2},\dots,v_{l} \in \mathcal{F}_{p}$, 
			\begin{align}\label{eq:-integral.conti.nonlinear}
				&\absBig{\mathcal{I}_{p}^{(l)}[u;v_{1},v_{2},\dots,v_{l}](\varphi) - \mathcal{I}_{p}^{(l)}[u';v_{1},v_{2},\dots,v_{l}](\varphi)} \\
				&\quad \le C_{p}\Bigl[\mathcal{I}_{p}[u](\abs{\varphi})^{\frac{1}{p}} \vee \mathcal{I}_{p}[u'](\abs{\varphi})^{\frac{1}{p}}\Bigr]^{(p-l-1)^{+}} \notag \\
				&\quad \quad \times \bigl(\mathcal{I}_{p}[u-u'](\abs{\varphi})^{\frac{1}{p}}\bigr)^{(p-l) \wedge 1}\prod_{j=1}^{l}\mathcal{I}_{p}[v_j](\abs{\varphi})^{\frac{1}{p}} \notag \\
				&\quad \le C_{p}\norm{\varphi}_{\sup}\Bigl[\abs{u}_{\mathcal{F}_{p}} \vee \abs{u'}_{\mathcal{F}_{p}}\Bigr]^{(p-l-1)^{+}}\bigl(\abs{u - u'}_{\mathcal{F}_{p}}\bigr)^{(p-l) \wedge 1}\prod_{j=1}^{l}\abs{v_{j}}_{\mathcal{F}_{p}}, \notag 
			\end{align}
			where $C_{p}$ is the same constant as in \eqref{eq:-integral.pre-conti.nonlinear}. 
	\end{enumerate}
\end{prop}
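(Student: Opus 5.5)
The proposition follows by passing to the limit $n \to \infty$ along $\{ r_{n} \}_{n}$ in each assertion of Lemma \ref{lem:-preem.basic}. This works exactly as Proposition \ref{prop:-form.basic} was deduced from Lemma \ref{lem:-preform.basic}, since every quantity in the statement is, by \eqref{eq:defn.integralform}, a limit of the corresponding quantity at scale $r_{n}$. One point needs care first. The right-hand sides of \eqref{eq:-integral.holder} and \eqref{eq:-integral.conti.nonlinear} involve $\mathcal{I}_{p}[\,\cdot\,](\abs{\varphi})$, evaluated at $\abs{\varphi}$ rather than $\varphi$. So I will choose $\{ r_{n} \}_{n}$ so that Lemma \ref{lem:-integral.subsec} applies simultaneously to $\varphi$ and to $\abs{\varphi}$. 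This is possible by applying that lemma to $\varphi$ and then to $\abs{\varphi}$ along the resulting subsequence (both lie in $C_{b}(X)$); I read the statement with this choice of subsequence understood.

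For (a), the scaling identity in Lemma \ref{lem:-preem.basic}(a) holds for each $r_{n}$ with a constant factor independent of $n$, so it survives the limit. For (b), multilinearity and symmetry of $I_{p,r_{n}}^{(l)}[u;\,\cdot\,](\varphi)$ are algebraic identities among finitely many convergent sequences, hence they pass to the limit by linearity of limits. For (c), I take $n \to \infty$ in \eqref{eq:-integral.pre-holder}. The left side converges to $\abs{\mathcal{I}_{p}^{(l)}[u;v_{1},\dots,v_{l}](\varphi)}$. On the right, each $I_{p,r_{n}}[w](\abs{\varphi})$ converges to $\mathcal{I}_{p}[w](\abs{\varphi})$ by the choice of subsequence, and $t \mapsto t^{\theta}$ is continuous on $[0,\infty)$, which gives the first inequality. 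The second inequality is the uniform bound $I_{p,r}[w](\abs{\varphi}) \le \norm{\varphi}_{\sup}E_{p,r}(w) \le \norm{\varphi}_{\sup}(\abs{w}_{\mathcal{F}_{p}})^{p}$, valid for every $r$. The resulting factor is $\norm{\varphi}_{\sup}^{\frac{p-l}{p}+\frac{l}{p}} = \norm{\varphi}_{\sup}$.

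For (d), I take the limit in \eqref{eq:-integral.pre-conti.nonlinear}. The maximum of two convergent sequences converges to the maximum of the limits, and the powers are continuous. For the final bound, the exponents sum to $(p-l-1)^{+} + ((p-l)\wedge 1) + l$. This equals $p$ when $p-l \ge 1$, and when $p-l<1$ it equals $p$ as well, since then $(p-l-1)^{+}=0$. Hence the $\norm{\varphi}_{\sup}^{1/p}$ factors combine to $\norm{\varphi}_{\sup}$, exactly as in Lemma \ref{lem:-preem.basic}. The main (mild) obstacle is the subsequence issue for $\abs{\varphi}$ flagged at the start; once that is handled, everything else is routine limit passage.
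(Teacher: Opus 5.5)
Your proposal is correct and follows the paper's route. The paper simply declares the proposition immediate from \eqref{eq:defn.integralform} and Lemma \ref{lem:-preem.basic}, by passing to the limit along $\{ r_{n} \}_{n}$ exactly as for Proposition \ref{prop:-form.basic}; your refinement of the subsequence so that it serves $\varphi$ and $\abs{\varphi}$ simultaneously is a legitimate point the paper leaves implicit, needed because the right-hand sides involve $\mathcal{I}_{p}[\,\cdot\,](\abs{\varphi})$ while the sequence may depend on the test function, and your exponent count giving the single factor $\norm{\varphi}_{\sup}$ in (c) and (d) is right.
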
 

A trouble here is that the subsequence $\{ r_{n} \}_{n}$ used in \eqref{eq:defn.integralform} may depend on $\varphi$. 
Because of this, the linearity of $\varphi \mapsto \mathcal{I}_{p}^{(l)}[u; v_{1},v_{2},\dots,v_{l}](\varphi)$ can not be ensured at this stage. 
To overcome this issue, we need to choose the sequence $\{ r_n \}_{n}$ used in the constructions of $\mathcal{I}_{p}^{(l)}$ and $\mathcal{I}_{p}$ to be the same as the sequence used to define the Korevaar--Schoen energy measures in \eqref{eq:repformula-another}.

The precise setting and assumptions are as follows. 
In the rest of the paper, we assume that the assumptions of Proposition \ref{prop:em-constr} hold; namely $\bm{k} = \{ k_r \}_{r > 0}$ satisfies the weak monotonicity property \eqref{eq:WM} and is asymptotically local (recall \eqref{eq:asymplocal}), $\mathcal{F}_{p} \cap C_{c}(X)$ is dense in $C_{c}(X)$, and $\{ \delta_n \}_{n}$ is a sequence in $(0,\infty)$ converging to $0$ such that the equality \eqref{eq:repformula-another} with $\delta_n$ in place of $r_n$ holds for all $\varphi \in C_{c}(X)$. 
(As in Proposition \ref{prop:em-constr}, we use $\Gamma_{p}\langle u \rangle$ to denote the $p$-energy measure of $u \in \mathcal{D}_{p,b}$ associated with the $\bm{k}$-Korevaar--Schoen energy form along $\{ \delta_n \}_{n}$.) 
Now we take $\{ \delta_n \}_{n}$ as the sequence $\{ \widetilde{r}_{n} \}_{n}$ in Definition \ref{defn:-integral.KS} and define $\mathcal{I}_{p}^{(l)}[u; v_{1},v_{2},\dots,v_{l}](\varphi)$ and $\mathcal{I}_{p}[u](\varphi)$ for $0 \le l \le p$ and $u,v_1,v_2,\dots,v_l \in \mathcal{F}_{p}$ accordingly.  
Note that if $u \in \mathcal{D}_{p,b}$ and $\varphi \in C_{c}(X)$, then we have from \eqref{eq:repformula-another} and \eqref{eq:defn.integralform} that 
\begin{equation}\label{eq:identity}
	\mathcal{I}_{p}[u](\varphi) = \int_{X}\varphi\,d\Gamma_{p}\langle u \rangle.  
\end{equation} 
 
The following lemma ensures the desired linearity. 
\begin{lem}\label{lem:-integral.linear}
	Under the above situation, for any $l \in \mathbb{Z}$ with $1 \le l \le m(p)$, any $\varphi \in C_{c}(X)$ and any $u,v \in \mathcal{D}_{p,b}$, 
	\begin{equation}\label{eq:-integral.l-derivative}
		\mathcal{I}_{p}^{(l)}[u; v,v,\dots,v](\varphi)
		= \frac{1}{p(p-1) \cdots (p-l+1)}\frac{d^{l}}{dt^{l}}\int_{X}\varphi\,d\Gamma_{p}\langle u + tv \rangle\bigg|_{t=0}  
	\end{equation}
	Consequently, for any $u,v_{1},v_{2},\dots,v_{l} \in \mathcal{D}_{p,b}$, 
	\begin{equation}\label{eq:-integral.limit}
		\mathcal{I}_{p}^{(l)}[u; v_{1},v_{2},\dots,v_{l}](\varphi)
		= \lim_{n \to \infty}I_{p,\delta_{n}}^{(l)}[u; v_{1},v_{2},\dots,v_{l}](\varphi)
	\end{equation}
	If $p$ is even, then \eqref{eq:-integral.l-derivative} and \eqref{eq:-integral.limit} hold in the case of $l = p$ as well. 
\end{lem}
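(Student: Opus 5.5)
The plan is to copy the proof of Theorem \ref{thm:-form.diffble}, applied to the functional $F_{\varphi}(w) \coloneqq \int_{X}\varphi\,d\Gamma_{p}\langle w \rangle$, $w \in \mathcal{D}_{p,b}$. First I check that $u + tv \in \mathcal{D}_{p,b}$ for $u,v \in \mathcal{D}_{p,b}$, since $\mathcal{D}_{p,b}$ is a closed linear subspace. Then \eqref{eq:identity} gives
\[
F_{\varphi}(u+tv) = \mathcal{I}_{p}[u+tv](\varphi) = \lim_{n\to\infty} I_{p,\delta_n}[u+tv](\varphi),
\]
where the limit is along the full sequence $\{\delta_n\}_{n}$ by \eqref{eq:repformula-another}. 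Splitting $\varphi = \varphi^{+} - \varphi^{-}$ and applying Lemma \ref{lem:BS} pointwise with $a = u(x)-u(y)$ and $b = t(v(x)-v(y))$ yields, for every $r>0$,
\[
\absBigg{I_{p,r}[u+tv](\varphi) - I_{p,r}[u](\varphi) - \sum_{l=1}^{\lfloor p \rfloor}\binom{p}{l}t^{l} I_{p,r}^{(l)}[u;v,\dots,v](\varphi)} \le C\abs{t}^{p} I_{p,r}[v](\abs{\varphi}) \le C\abs{t}^{p}\norm{\varphi}_{\sup}(\abs{v}_{\mathcal{F}_{p}})^{p}.
\]

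Next I pass to the limit along the subsequence $\{r_n\}_{n}$ of $\{\delta_n\}_{n}$ from Definition \ref{defn:-integral.KS}, chosen for this $\varphi$. On this subsequence every term converges. The terms with $l=0$ converge to $F_{\varphi}(u+tv)$ and $F_{\varphi}(u)$, because $\{r_n\}_{n}$ is a subsequence of $\{\delta_n\}_{n}$. The result is
\[
\absBigg{F_{\varphi}(u+tv) - F_{\varphi}(u) - \sum_{l=1}^{\lfloor p \rfloor}\binom{p}{l}t^{l}\,\mathcal{I}_{p}^{(l)}[u;v,\dots,v](\varphi)} \le C'\abs{t}^{p}.
\]
Since $p > m(p)$, this says that the real function $g(t) \coloneqq F_{\varphi}(u+tv)$ has a Taylor expansion of order $m(p)$ at $t=0$ with remainder $O(\abs{t}^{p}) = o(\abs{t}^{m(p)})$.

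A one-point expansion alone does not give existence of $g^{(l)}(0)$ for $l \ge 2$. The fix is to use the same expansion at every base point $u+sv$, with coefficients $a_{l}(s) \coloneqq \binom{p}{l}\mathcal{I}_{p}^{(l)}[u+sv;v,\dots,v](\varphi)$. By \eqref{eq:-integral.conti.nonlinear}, each $a_{l}$ is continuous in $s$ with a uniform modulus $\abs{s-s'}^{(p-l)\wedge 1}$, and the remainder bound $C'\abs{t}^{p}$ is locally uniform in $s$. The one-dimensional converse of Taylor's theorem (the scalar case of \cite[Theorem on p.~348]{AD71}) then shows that $g \in C^{m(p)}$ with $g^{(l)}(s) = l!\,a_{l}(s)$. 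Evaluating at $s=0$ gives \eqref{eq:-integral.l-derivative}. I expect this step to be the main obstacle, because \cite{AD71} needs continuity of the coefficients and uniformity of the remainder, and both have to be read off from Proposition \ref{prop:-integral.basic}.

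The identity \eqref{eq:-integral.limit} follows from this. The right-hand side of \eqref{eq:-integral.l-derivative} depends only on $\Gamma_{p}$, which is defined along the full sequence $\{\delta_n\}_{n}$. So the limit of $I_{p,\delta_n}^{(l)}[u;v,\dots,v](\varphi)$ is the same along every subsequence that admits a limit. Combined with boundedness \eqref{eq:-integral.pre-holder} and the existence of convergent subsequences (Lemma \ref{lem:-integral.subsec} applied to any subsequence), the full sequence converges. For distinct $v_{1},\dots,v_{l}$, I use polarization: a symmetric $l$-linear form is determined by its diagonal values on $\mathrm{span}\{v_j\}\subseteq\mathcal{D}_{p,b}$. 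Since $I_{p,\delta_n}^{(l)}[u;\cdot](\varphi)$ is symmetric multilinear, its value at $(v_1,\dots,v_l)$ is a fixed finite linear combination of diagonal values at sums $\sum_j \epsilon_j v_j$, and each of those converges. Hence the full limit exists and equals $\mathcal{I}_{p}^{(l)}[u;v_1,\dots,v_l](\varphi)$.

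For $p$ even and $l=p$, the integrand $(u(x)-u(y))^{p}$ is a polynomial in $u$, so the expansion in $t$ is exact with no remainder. Then $g$ is a polynomial in $t$ whose $p$-th coefficient is the limit of $I_{p,\delta_n}^{(p)}[v,\dots,v](\varphi)$. That limit exists along the full sequence because it can be read off as a finite difference of values of $g$, which are limits along $\{\delta_n\}_{n}$. This gives \eqref{eq:-integral.l-derivative} for $l=p$, and polarization as above gives \eqref{eq:-integral.limit} for $l=p$.
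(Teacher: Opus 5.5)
Your proposal is correct and follows essentially the paper's proof. The shared steps are: the Banach--Saks remainder estimate passed to the limit along a subsequence where all limits exist, identification of the $l=0$ terms with $\Gamma_p$ via \eqref{eq:identity}, the converse of Taylor's theorem, the arbitrariness of the subsequence to get convergence along the full $\{\delta_n\}_n$, and polarization. Your reduction to $g(s)=\int_X\varphi\,d\Gamma_p\langle u+sv\rangle$ with expansions at every base point $u+sv$ just makes explicit the continuity-of-coefficients and uniform-remainder hypotheses the paper uses implicitly, and your exact-polynomial/finite-difference treatment of even $p$ is an equivalent substitute for \eqref{eq:identity.even}.
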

\begin{proof}
	Fix $\varphi \in C_{c}(X)$. 
	Let $\{ \delta_{n_k} \}_{k}$ be any subsequence of $\{ \delta_{n} \}_{n}$ such that the following limit exists in $\mathbb{R}$ for any $0 \le l \le p$ and  any $u,v \in \mathcal{F}_{p}$ (recall Lemma \ref{lem:-integral.subsec}): 
	\begin{equation}\label{eq:choice.limit}
		\lim_{k \to \infty}I_{p,\delta_{n_k}}^{(l)}[u; v,v,\dots,v](\varphi) 
		\eqqcolon \widetilde{\mathcal{I}}_{p}^{(l)}[u; v,v,\dots,v](\varphi). 
	\end{equation}
	Now assume $u,v \in \mathcal{D}_{p,b}$. 
	Similarly to \eqref{eq:remainder}, by virtue of \eqref{eq:taylor}, we can obtain 
	\begin{align}\label{eq:-integral.remainder}
		&\absBigg{\mathcal{I}_{p}[u+v](\varphi) - \mathcal{I}_{p}[u](\varphi) - \sum_{l=1}^{\lfloor p \rfloor}\binom{p}{l}\widetilde{\mathcal{I}}_{p}^{(l)}[u; v,v,\dots,v](\varphi)} \\
		&\quad \le C\,\mathcal{I}_{p}[v]\bigl(\abs{\varphi}\bigr) 
		\le C\norm{\varphi}_{\sup}\mathcal{E}_{p}(v). \notag 
	\end{align}
	This together with \eqref{eq:identity} and the converse of Taylor's theorem (e.g., \cite[Theorem on p.~348]{AD71}) implies that for $1 \le l \le m(p)$, 
	\[
	\widetilde{\mathcal{I}}_{p}^{(l)}[u; v,v,\dots,v](\varphi)
		= \frac{1}{p(p-1) \cdots (p-l+1)}\frac{d^{l}}{dt^{l}}\mathcal{I}_{p}[u+tv](\varphi)\bigg|_{t=0}. 
	\]
	Hence \eqref{eq:-integral.l-derivative} holds.  
	Furthermore, as the choice of subsequence $\{ \delta_{n_k} \}_{k}$ is arbitrary as long as the limit in \eqref{eq:choice.limit} exists, we obtain \eqref{eq:-integral.limit} in the case of $v_{j} = v \, (1 \le j \le l)$. 
	Since any (real) multilinear form can be expressed in terms of its diagonal part by the polarization formula (see, e.g., \cite[Theorem 1 or Eq. (7)]{Tho14}), the equality \eqref{eq:-integral.limit} follows. 
	
	When $p$ is even, by following an argument similar to that in the proof of Theorem \ref{thm:-form.diffble}, one can obtain the following equality: 
	\begin{align}\label{eq:identity.even}
		&\widetilde{\mathcal{I}}_{p}^{(p-1)}[u; v_{1},v_{2},\dots,v_{p-1}](\varphi) - \widetilde{\mathcal{I}}_{p}^{(p-1)}[u'; v_{1},v_{2},\dots,v_{p-1}](\varphi)  \\
		&= \widetilde{\mathcal{I}}_{p}^{(p)}[u-u',v_{1},v_{2},\dots,v_{p-1}](\varphi), \notag 
	\end{align}
	This is enough for proving \eqref{eq:-integral.l-derivative} and \eqref{eq:-integral.limit} in the case of $l = p$. 
\end{proof}

We can now associate a unique finite signed Borel measure with the bounded linear functional $\varphi \mapsto \mathcal{I}_{p}^{(l)}[u; v_{1},v_{2},\dots,v_{l}](\varphi)$.
\begin{prop}\label{prop:multilinear.em}
	Assume the same assumptions as in Lemma \ref{lem:-integral.linear}. 
	Let $l \in \mathbb{Z}$ with $1 \le l \le p$. 
	For any $u,v_1,v_2,\dots,v_l \in \mathcal{D}_{p,b}$, there exists a unique finite signed Borel measure\footnote{We can show that a signed Radon measure $\mu$ on $X$ satisfying \eqref{eq:defn-multilinear.em} has to satisfy $\abs{\mu}(X) < \infty$. As a result, the same conclusion on the uniqueness holds even if we replace the condition ``finite signed Borel measure'' by ``signed Radon measure''. We omit the proof to avoid unnecessary technical details.} $\Gamma_{p}^{(l)}\langle u;v_1,v_2,\dots,v_l \rangle$ on $X$ such that  
	\begin{equation}\label{eq:defn-multilinear.em}
		\int_{X}\varphi\,d\Gamma_{p}^{(l)}\langle u;v_1,v_2,\dots,v_l \rangle 
		= \mathcal{I}_{p}^{(l)}[u; v_{1},v_{2},\dots,v_{l}](\varphi)
	\end{equation}
	for all $\varphi \in C_{c}(X)$.
	Moreover, the following hold: 
	\begin{enumerate}[\rm(a)]
		\item\label{it:multiem-homog} For any $a \in \mathbb{R}$ and any $u \in \mathcal{D}_{p,b}$, the equality $\Gamma_{p}^{(l)}\langle au;\,\cdot\, \rangle = \abs{a}^{p-l}\mathrm{sgn}(a)^{l}\Gamma_{p}^{(l)}\langle u;\,\cdot\, \rangle$ as signed measures hold.
		\item\label{it:multiem-form} Let $A$ be a Borel set of $X$. 
			If $l \ge 1$, then, for any $u \in \mathcal{D}_{p,b}$, $\Gamma_{p}^{(l)}\langle u;\,\cdot\, \rangle(A) \colon (\mathcal{D}_{p,b})^{l} \to \mathbb{R}$ is an $l$-multilinear symmetric form. 
	\end{enumerate}
\end{prop}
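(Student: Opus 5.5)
The plan is to show that $\Lambda(\varphi) \coloneqq \mathcal{I}_{p}^{(l)}[u; v_{1},\dots,v_{l}](\varphi)$ is a bounded linear functional on $C_{c}(X)$, and then apply the Riesz--Markov--Kakutani representation theorem.

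\emph{Linearity.} By \eqref{eq:-integral.limit} in Lemma \ref{lem:-integral.linear}, for $u,v_j \in \mathcal{D}_{p,b}$ the value $\Lambda(\varphi)$ is the limit along the \emph{whole} sequence $\{\delta_n\}_n$, which does not depend on $\varphi$. Since $\varphi \mapsto I_{p,\delta_n}^{(l)}[u;v_1,\dots,v_l](\varphi)$ is linear for each $n$, $\Lambda$ is linear on $C_{c}(X)$. (For $l=p$ with $p$ even, use the corresponding part of Lemma \ref{lem:-integral.linear}; for $l=p$ odd one has $l\le m(p)$ failing, so presumably only $l \le m(p)$ or $p$ even is intended, and I would treat those cases.)

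\emph{Boundedness.} By \eqref{eq:-integral.holder}, $\abs{\Lambda(\varphi)} \le \norm{\varphi}_{\sup}(\abs{u}_{\mathcal{F}_p})^{p-l}\prod_j \abs{v_j}_{\mathcal{F}_p}$. Hence $\Lambda$ extends to a bounded functional on $C_0(X)$, the closure of $C_{c}(X)$ in the sup norm, where $X$ is locally compact. Riesz--Markov--Kakutani then gives a unique finite signed Radon measure $\mu$ with $\int\varphi\,d\mu = \Lambda(\varphi)$ and $\abs{\mu}(X) = \norm{\Lambda} < \infty$. To get the uniqueness claim among finite signed Borel measures, I would use two facts. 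First, two finite signed Borel measures agreeing on $C_{c}(X)$ agree on $C_0(X)$ by dominated convergence. Second, finite Borel measures on a separable, hence second countable, locally compact metric space are automatically Radon, since such a space is $\sigma$-compact and Polish-like regularity holds. Uniqueness therefore reduces to the Radon case. This regularity point is the main technical obstacle; if needed, I would argue via the Hahn--Jordan decomposition and the fact that finite Borel measures on metric spaces are outer regular with respect to open sets. Together with $\sigma$-compactness, this yields inner regularity by compacts.

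\emph{Properties.} For (a), Proposition \ref{prop:-integral.basic}(a) gives $\Lambda_{au} = \abs{a}^{p-l}\mathrm{sgn}(a)^l\Lambda_u$ on $C_{c}(X)$; note $au \in \mathcal{D}_{p,b}$. Uniqueness then gives equality of the measures. For (b), multilinearity and symmetry of $\mathcal{I}_p^{(l)}[u;\cdot](\varphi)$ from Proposition \ref{prop:-integral.basic}(b), combined with uniqueness, show that $\Gamma_p^{(l)}\langle u;\cdot\rangle$ is multilinear and symmetric as a map into signed measures. For example, $\Gamma\langle u; v_1+cv_1',\dots\rangle$ and $\Gamma\langle u;v_1,\dots\rangle + c\,\Gamma\langle u;v_1',\dots\rangle$ represent the same functional. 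Evaluating at a Borel set $A$ then gives the claim.
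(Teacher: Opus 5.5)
Your proposal is correct and follows the paper's own proof: linearity from \eqref{eq:-integral.limit}, boundedness from \eqref{eq:-integral.holder}, extension to $C_0(X)$, Riesz--Markov--Kakutani, and then (a) and (b) from uniqueness. The paper leaves two points implicit that you make explicit, and both are right: finite Borel measures on a locally compact separable metric space are regular, so RMK uniqueness covers arbitrary finite signed Borel measures; and the case $l=p$ with $p$ odd is not covered by Lemma \ref{lem:-integral.linear}.
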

\begin{proof}
	By virtue of \eqref{eq:-integral.limit} and \eqref{eq:-integral.holder}, the map $\Psi \colon C_{c}(X) \to \mathbb{R}$ defined as 
	\[
	\Psi(\varphi) \coloneqq \mathcal{I}_{p}^{(l)}[u; v_{1},v_{2},\dots,v_{l}](\varphi)
	\]
	is a bounded linear functional. 
	By a standard argument using \eqref{eq:-integral.holder}, we can uniquely extend $\Psi$ to a bounded linear functional $\widetilde{\Psi}$ on $C_{0}(X)$, where $C_{0}(X)$ is the set of continuous functions that vanish at infinity \cite[Definition 3.16]{Rud}. 
	By applying the Riesz--Markov--Kakutani representation theorem \cite[Theorem 6.19]{Rud} to $\widetilde{\Psi}$, we obtain a unique finite signed Borel measure $\Gamma_{p}^{(l)}\langle u;v_1,v_2,\dots,v_l \rangle$ on $X$ such that 
	\begin{equation}\label{eq:extRMK}
		\widetilde{\Psi}(\varphi) = \int_{X}\varphi\,d\Gamma_{p}^{(l)}\langle u;v_1,v_2,\dots,v_l \rangle \qquad \text{for all $\varphi \in C_{0}(X)$,}
	\end{equation}
	which implies \eqref{eq:defn-multilinear.em}. 
	
	To prove the claimed uniqueness of $\Gamma_{p}^{(l)}\langle u;v_1,v_2,\dots,v_l \rangle$, let $\mu$ be another finite singed Borel measure on $X$ satisfying \eqref{eq:defn-multilinear.em}. 
	Since $\mu$ is a finite signed measure on $X$, the functional $I_{\mu}$ given by $I_{\mu}(\varphi) \coloneqq \int_{X}\varphi\,d\mu$, $\varphi \in C_{0}(X)$, is a bounded linear functional on $C_{0}(X)$ that coincides with $\Psi$ on  $C_{c}(X)$. 
	Thus we have $I_{\mu} = \widetilde{\Psi}$.
	The desired uniqueness $\mu = \Gamma_{p}^{(l)}\langle u;v_1,v_2,\dots,v_l \rangle$ follows from the Riesz--Markov--Kakutani representation theorem. 
	The uniqueness through the representation \eqref{eq:defn-multiform} immediately implies \ref{it:multiem-homog} and \ref{it:multiem-form}. 
	The proof is completed. 
\end{proof}

\begin{prop}\label{prop:genintform}
	Assume the same assumptions as in Lemma \ref{lem:-integral.linear}. 
	Let $l \in \mathbb{Z}$ with $1 \le l \le p$ and $\varphi, \psi_{1},\psi_{2},\dots,\psi_{l} \colon X \to [0,\infty)$ be bounded Borel measurable functions.   
	\begin{enumerate}[\rm(a)]
		\item\label{it:genintform-form} For any $u \in \mathcal{D}_{p,b}$, $\int_{X}\varphi\,d\Gamma_{p}^{(l)}\langle u; \,\cdot\, \rangle \colon (\mathcal{D}_{p,b})^{l} \to [0,\infty)$ is an $l$-multilinear symmetric form. 
		\item\label{it:genintform-holder} For any $u,v_{1},v_{2},\dots,v_{l} \in \mathcal{D}_{p,b}$, 
			\begin{align}\label{eq:em-holder}
				&\int_{X}\varphi\left(\prod_{j=1}^{l}\psi_{j}\right)\,d\absBig{\Gamma_{p}^{(l)}\langle u;v_{1},v_{2},\dots,v_{l}\rangle} \\
				&\quad \le \left(\int_{X}\varphi^{\frac{p}{p-l}}\,d\Gamma_{p} \langle u \rangle\right)^{\frac{p-l}{p}}\prod_{j=1}^{l}\left(\int_{X}\psi_{j}\,d\Gamma_{p}\langle v_{j} \rangle\right)^{\frac{1}{p}}. \notag 
			\end{align} 
		\item\label{it:genintform-nconti} For any $u_{1},u_{2},v_{1},v_{2},\dots,v_{l} \in \mathcal{D}_{p,b}$, 
			\begin{align}\label{eq:em-ncont}
				&\absbigg{\int_{X}\varphi\,d\Gamma_{p}^{(l)}\langle u_{1};v_{1},v_{2},\dots,v_{l} \rangle - \int_{X}\varphi\,d\Gamma_{p}^{(l)}\langle u_{2};v_{1},v_{2},\dots,v_{l} \rangle}  \\
				&\le C_{p}\Biggl[\max_{i \in \{ 1,2 \}}\int_{X}\varphi\,d\Gamma_{p}\langle u_{i} \rangle\Biggr]^{\frac{(p-l-1)^{+}}{p}}\left(\int_{X}\varphi\,d\Gamma_{p}\langle u_{1}-u_{2} \rangle\right)^{\frac{(p-l) \wedge 1}{p}} \notag \\
				&\quad \times \prod_{j=1}^{l}\left(\int_{X}\varphi\,d\Gamma_{p}\langle v_j \rangle\right)^{\frac{1}{p}},  \notag 
			\end{align}
			where $C_{p}$ is the same constant as in \eqref{eq:pre-conti.nonlinear}.  
	\end{enumerate}
\end{prop}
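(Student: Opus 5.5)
The plan is to pass from continuous test functions to bounded Borel ones by approximation. I will first prove all three statements for $\varphi,\psi_j \in C_{c}(X)$ with values in $[0,\infty)$, working at the level of the double integrals $I_{p,\delta_n}^{(l)}$ and taking limits. Then I will extend to bounded nonnegative Borel functions by a monotone class / regularity argument for the finite measures $\Gamma_{p}\langle\cdot\rangle$ and $\abs{\Gamma_{p}^{(l)}\langle\cdots\rangle}$. Part (a) needs no approximation: multilinearity and symmetry of $\Gamma_{p}^{(l)}\langle u;\,\cdot\,\rangle(A)$ for Borel sets $A$ is already Proposition \ref{prop:multilinear.em}\ref{it:multiem-form}, and it passes to integrals of bounded Borel $\varphi$ by linearity over simple functions and dominated convergence. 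The stated codomain $[0,\infty)$ should presumably read $\mathbb{R}$.

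\textbf{Step 1: the H\"older bound (b) for continuous test functions.} For $\varphi,\psi_1,\dots,\psi_l \in C_{c}(X)$ nonnegative, I will write the integrand of $I_{p,r}^{(l)}[u;v_1,\dots,v_l](\varphi\prod_j\psi_j)$ (taken with $\varphi(x)\prod_j\psi_j(x)$) as the product of
\[
\abs{u(x)-u(y)}^{p-l}\varphi(x)k_r^{\frac{p-l}{p}} \quad\text{and}\quad \abs{v_j(x)-v_j(y)}\psi_j(x)^{\frac{1}{p}}\cdots,
\]
distributing the weights so that the generalized H\"older inequality with exponents $\frac{p}{p-l},p,\dots,p$ gives
\[
\absBig{I_{p,r}^{(l)}[u;v_1,\dots,v_l](\varphi\textstyle\prod_j\psi_j^{1/p}\cdot\psi_j^{(p-1)/p})} \le I_{p,r}[u](\varphi^{\frac{p}{p-l}})^{\frac{p-l}{p}}\textstyle\prod_j I_{p,r}[v_j](\psi_j)^{\frac1p}.
\]
The exponents must be organized so that $\varphi$ goes entirely to the $u$-factor and $\psi_j$ to the $v_j$-factor; a literal product $\varphi\prod\psi_j$ splits as $\varphi\cdot\prod_j\psi_j$ with $\psi_j=(\psi_j^{p})^{1/p}$, so I may need to replace $\psi_j$ by $\psi_j^{p}$ or otherwise check the intended normalization. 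Letting $n\to\infty$ along $\delta_n$ and using \eqref{eq:-integral.limit} and \eqref{eq:identity} yields a bound for $\abs{\int \varphi\prod_j\psi_j\,d\Gamma_{p}^{(l)}}$. To get the total variation on the left, I will apply this to test functions $g\varphi\prod\psi_j$ with $\abs{g}\le 1$, $g\in C_c$. Taking the supremum over $g$ and using the duality $\int f\,d\abs{\mu}=\sup\{\int gf\,d\mu: g\in C_c,\ \abs{g}\le1\}$ for continuous $f\ge0$ then gives (b) for continuous data. The point is that the $g$ factor can be absorbed into the $\varphi$ slot, since $\abs{g\varphi}^{p/(p-l)}\le\varphi^{p/(p-l)}$.

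\textbf{Step 2: the nonlinear continuity (c) for continuous $\varphi$.} This follows directly from \eqref{eq:-integral.conti.nonlinear} with $\abs{\varphi}=\varphi$, by passing to the limit along $\delta_n$. Here all the $u_i$, $u_1-u_2$ and $v_j$ lie in $\mathcal{D}_{p,b}$, so \eqref{eq:identity} identifies each $\mathcal{I}_p[\cdot](\varphi)$ with $\int\varphi\,d\Gamma_p\langle\cdot\rangle$, and \eqref{eq:-integral.limit} identifies the left-hand side.

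\textbf{Step 3: extension to bounded Borel functions.} This is the main technical obstacle. All the measures involved are finite Radon measures on a locally compact separable metric space. I will first pass from $C_c$ to indicators of open sets: for $U$ open, $\indicator{U}$ is an increasing limit of nonnegative $C_c$ functions, so monotone convergence applies on both sides. Next I will move to compact and then Borel sets by outer regularity, taking decreasing limits with dominated convergence using finiteness. Finally, I pass to nonnegative simple functions and uniform limits. In (b) the right-hand side is nonlinear in $\varphi$ and the $\psi_j$, but each factor is monotone and continuous under bounded monotone convergence, so one can approximate each function separately by an increasing sequence of lower semicontinuous or simple functions and pass to the limit factor by factor. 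The delicate point is the family of functions that are simultaneously bounded pointwise limits of $C_c$ functions. I expect to handle it by the standard argument via Lusin's theorem applied to the finite measure $\abs{\Gamma_{p}^{(l)}}+\Gamma_p\langle u\rangle+\sum_j\Gamma_p\langle v_j\rangle$ (plus $\Gamma_p\langle u_1-u_2\rangle$ etc. in (c)): approximate Borel $\varphi$ in $L^1$ of this dominating measure by $C_c$ functions bounded by $\sup\varphi$, pass to an a.e.\ convergent subsequence, and apply dominated convergence in every term.
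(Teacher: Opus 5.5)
Your hesitation in Step 1 is justified, and you should settle it rather than leave it open: inequality \eqref{eq:em-holder} as printed is false. Take $l=1$ and $v_{1}=u\in\mathcal{D}_{p,b}$. Since $\abs{a}^{p-1}\mathrm{sgn}(a)\,a=\abs{a}^{p}$, we have $I_{p,r}^{(1)}[u;u]=I_{p,r}[u]$. Hence $\Gamma_{p}^{(1)}\langle u;u\rangle=\Gamma_{p}\langle u\rangle$ by \eqref{eq:identity} and the uniqueness in Proposition \ref{prop:multilinear.em}. Now choose $\varphi\equiv1$ and $\psi_{1}\equiv c>1$. The printed inequality then reads $c\,\Gamma_{p}\langle u\rangle(X)\le c^{1/p}\,\Gamma_{p}\langle u\rangle(X)$, which fails whenever $\Gamma_{p}\langle u\rangle(X)>0$. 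In general the two sides have different homogeneity in each $\psi_{j}$.

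The provable statement is the one below. It is also exactly what the paper's own argument (setwise bound, then simple functions) yields:
\[
\int_{X}\varphi\Bigl(\prod_{j=1}^{l}\psi_{j}\Bigr)\,d\absBig{\Gamma_{p}^{(l)}\langle u;v_{1},\dots,v_{l}\rangle}
\le\Bigl(\int_{X}\varphi^{\frac{p}{p-l}}\,d\Gamma_{p}\langle u\rangle\Bigr)^{\frac{p-l}{p}}\prod_{j=1}^{l}\Bigl(\int_{X}\psi_{j}^{p}\,d\Gamma_{p}\langle v_{j}\rangle\Bigr)^{\frac{1}{p}}.
\]
Your displayed Step 1 inequality with $I_{p,r}[v_{j}](\psi_{j})^{1/p}$ is false for the same homogeneity reason. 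Discard it, together with the splitting $\psi_{j}=\psi_{j}^{1/p}\psi_{j}^{(p-1)/p}$. Instead attach the weight $\abs{g\varphi}(x)$ to the $u$-factor and $\psi_{j}(x)$ to the $v_{j}$-factor, and apply H\"older with exponents $\frac{p}{p-l},p,\dots,p$. This gives
\[
\absBig{I_{p,r}^{(l)}[u;v_{1},\dots,v_{l}]\Bigl(g\varphi\prod_{j=1}^{l}\psi_{j}\Bigr)}
\le I_{p,r}[u]\bigl(\varphi^{\frac{p}{p-l}}\bigr)^{\frac{p-l}{p}}\prod_{j=1}^{l}I_{p,r}[v_{j}]\bigl(\psi_{j}^{p}\bigr)^{\frac{1}{p}}.
\]
For $l=p$ the $u$-factor is meaningless and should be read as $\sup\varphi$. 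You are also right that the codomain in (a) must be $\mathbb{R}$.

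With this correction your proof is sound, but part (b) is organized differently from the paper's.

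\textbf{The paper's route.} It first passes from the one-weight bound \eqref{eq:-integral.holder} to the setwise bound $\abs{\Gamma_{p}^{(l)}\langle u;v_{1},\dots,v_{l}\rangle(A)}\le\Gamma_{p}\langle u\rangle(A)^{(p-l)/p}\prod_{j}\Gamma_{p}\langle v_{j}\rangle(A)^{1/p}$, using $L^{1}$-density of $C_{c}(X)$. It then upgrades this to the total variation via a Hahn decomposition and the discrete H\"older inequality. Only after that does it integrate simple functions to separate the weights.

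\textbf{Your route.} You build the separate weights and the total variation in at the level of the double integrals. This uses the two-weight estimate above, the absorption of $g$ into the $\varphi$-slot, and the duality $\int f\,d\abs{\mu}=\sup\{\int gf\,d\mu: g\in C_{c}(X),\ \abs{g}\le1\}$. The duality holds for finite Radon $\mu$, and every finite Borel measure is Radon here; it follows from Lusin applied to the polar density.

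\textbf{Trade-off.} Your route avoids the Hahn--Jordan step and the discrete H\"older step, and needs only one density passage. The price is the duality formula and a two-weight pre-limit H\"older that Lemma \ref{lem:-preem.basic} does not state, though it is immediate. The paper's route produces the setwise inequality for $\abs{\Gamma_{p}^{(l)}}$ as a useful by-product.

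In Step 3, the $L^{1}(\lambda)$-density and a.e.-subsequence argument you end with is exactly the paper's mechanism. It suffices on its own: truncate the approximants into $[0,\sup\varphi]$ and use continuity of $t\mapsto t^{p/(p-l)}$ and $t\mapsto t^{p}$ on bounded intervals. The open-set and outer-regularity chain before it is unnecessary. Adding $\Gamma_{p}\langle u_{1}-u_{2}\rangle$ to the dominating measure in (c) is genuinely needed to pass to the limit in that factor, and the paper's $\lambda$ omits it.
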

\begin{proof}
	\ref{it:genintform-form}: This is immediate from Proposition \ref{prop:multilinear.em}~\ref{it:multiem-form} and a standard approximation of $\varphi$ by a non-decreasing sequence of simple functions. 
	
	\ref{it:genintform-holder}: Let $\nu$ be the finite Borel measure on $X$ given by  
	\[
	\nu \coloneqq \absBig{\Gamma_{p}^{(l)}\langle u;v_{1},v_{2},\dots,v_{l}\rangle} + \Gamma_{p}\langle u \rangle + \sum_{j=1}^{l}\Gamma_{p}\langle v_{j} \rangle.
	\]
	Since $C_{c}(X)$ is dense in $L^1(X,\nu)$, we have from \eqref{eq:-integral.holder} that for all Borel sets $A$ of $X$, 
	\begin{equation}\label{eq:-em.setwiseholder}
		\absBig{\Gamma_{p}^{(l)}\langle u; v_{1},v_{2},\dots,v_{l} \rangle(A)} 
		\le \Gamma_{p}\langle u \rangle(A)^{\frac{p-l}{p}}\prod_{j=1}^{l}\Gamma_{p}\langle v_{j} \rangle(A)^{\frac{1}{p}}. 
	\end{equation}
	Let $X = \mathcal{P} \cup \mathcal{N}$ be the Hahn decomposition with respect to $\Gamma_{p}^{(l)}\langle u;v_{1},v_{2},\dots,v_{l}\rangle$, i.e., $\mathcal{P}$ and $\mathcal{N}$ are Borel sets of $X$ such that $\mathcal{P} \cap \mathcal{N} = \emptyset$, $\Gamma_{p}^{(l)}\langle u;v_{1},v_{2},\dots,v_{l}\rangle(A \cap \mathcal{P}) \ge 0$ and $\Gamma_{p}^{(l)}\langle u;v_{1},v_{2},\dots,v_{l}\rangle(A \cap \mathcal{N}) \le 0$ for all Borel sets $A$ of $X$. 
	Note that the total variation measure $\absbig{\Gamma_{p}^{(l)}\langle u;v_{1},v_{2},\dots,v_{l}\rangle}$ is given by the following Jordan decomposition: For any Borel set $A$ of $X$, 
	\begin{align*}
		&\absBig{\Gamma_{p}^{(l)}\langle u;v_{1},v_{2},\dots,v_{l}\rangle}(A)  \\
		&= \Gamma_{p}^{(l)}\langle u;v_{1},v_{2},\dots,v_{l}\rangle(A \cap \mathcal{P}) - \Gamma_{p}^{(l)}\langle u;v_{1},v_{2},\dots,v_{l}\rangle(A \cap \mathcal{N}). 
	\end{align*}
	Using the Jordan decomposition above, \eqref{eq:-em.setwiseholder} and H\"{o}lder's inequality, we can show that 
	\begin{equation}\label{eq:-em.setwiseholder-tv}
		\absBig{\Gamma_{p}^{(l)}\langle u; v_{1},v_{2},\dots,v_{l} \rangle}(A) 
		\le \Gamma_{p}\langle u \rangle(A)^{\frac{p-l}{p}}\prod_{j=1}^{l}\Gamma_{p}\langle v_{j} \rangle(A)^{\frac{1}{p}}. 
	\end{equation}
	For details of an argument similar to this one, see \cite[(4.14) in the proof of Proposition 4.8]{KS.gc}. 
	Now we can obtain \eqref{eq:em-holder} by combining \eqref{eq:-em.setwiseholder-tv} and approximations of $\varphi, \psi_{1},\psi_{2},\dots,\psi_{l}$ by simple functions; see \cite[Proof of Proposition 4.8]{KS.gc} for details. 
	
	\ref{it:genintform-nconti}: We can easily extend \eqref{eq:-integral.conti.nonlinear} to \eqref{eq:em-ncont} since $C_{c}(X)$ is dense in $L^{1}(X,\lambda)$ for the finite Borel measure $\lambda$ on $X$ given by 
	\[
	\lambda \coloneqq \sum_{i \in \{ 1,2 \}}\left(\absBig{\Gamma_{p}^{(l)}\langle u_{i}; v_{1},v_{2},\dots,v_{l} \rangle} + \Gamma_{p}\langle u_i \rangle\right) + \sum_{j=1}^{l}\Gamma_{p}\langle v_j \rangle, 
	\]
	so the proof is completed. 
\end{proof}

Finally, we can show the following main result. 
\begin{thm}\label{thm:em-derivatives}
	Assume the same assumptions as in Lemma \ref{lem:-integral.linear}. 
	Let $\varphi \colon X \to (-\infty,\infty)$ be a bounded Borel measurable function. 
	Then the map $\int_{X}\varphi\,d\Gamma_{p}\langle \,\cdot\, \rangle \colon \mathcal{D}_{p,b} \to [0,\infty)$ is of class $C^{m(p)}$, with its derivatives given by
	\[
	\left(D^{l}\int_{X}\varphi\,d\Gamma_{p}\langle \,\cdot\, \rangle\right)(u) 
	= p(p-1)\cdots(p-l+1) \int_{X}\varphi\,d\Gamma_{p}^{(l)}\langle u;\,\cdot\,\rangle
	\] 
	for $u \in \mathcal{D}_{p,b}$ and $1 \le l \le m(p)$. 
	Furthermore, if $p$ is even, then $\int_{X}\varphi\,d\Gamma_{p}\langle \,\cdot\, \rangle \colon \mathcal{D}_{p,b} \to [0,\infty)$ is of class $C^{\infty}$ with 
	\[
	D^{p}\int_{X}\varphi\,d\Gamma_{p}\langle \,\cdot\, \rangle = p!\int_{X}\varphi\,d\Gamma_{p}^{(p)}\langle \,\cdot\, \rangle
	\]
	and $D^{l}\int_{X}\varphi\,d\Gamma_{p}\langle \,\cdot\, \rangle = 0$ for $l > p$.
\end{thm}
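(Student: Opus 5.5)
We mirror the proof of Theorem \ref{thm:-form.diffble}, with $\mathcal{E}_{p}$ replaced by the functional $u \mapsto \int_{X}\varphi\,d\Gamma_{p}\langle u \rangle$ on the Banach space $\mathcal{D}_{p,b}$ (a closed subspace of $\mathcal{F}_{p}$).

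\emph{Reduction.} Write $\varphi = \varphi^{+} - \varphi^{-}$. Since both the functional and the candidate derivatives are linear in $\varphi$, it suffices to treat nonnegative bounded Borel $\varphi$. This lets us use Proposition \ref{prop:genintform}.

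\emph{Remainder estimate.} For $\varphi \in C_{c}(X)$, Lemma \ref{lem:-integral.linear} identifies $\mathcal{I}_{p}^{(l)}$ with the limit along the full sequence $\{\delta_n\}_n$. Integrating \eqref{eq:taylor} against $\varphi(x)k_{\delta_n}(x,y)$ and letting $n \to \infty$, we obtain for $u,v \in \mathcal{D}_{p,b}$
\[
\absBigg{\int_{X}\varphi\,d\Gamma_{p}\langle u+v \rangle - \int_{X}\varphi\,d\Gamma_{p}\langle u \rangle - \sum_{l=1}^{\lfloor p \rfloor}\binom{p}{l}\int_{X}\varphi\,d\Gamma_{p}^{(l)}\langle u;v,\dots,v \rangle} \le C\int_{X}\abs{\varphi}\,d\Gamma_{p}\langle v \rangle.
\]
Here \eqref{eq:identity} and \eqref{eq:defn-multilinear.em} are used.

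To pass to bounded Borel $\varphi \ge 0$, we argue by density. Fix $u,v$ and the finite measure
\[
\lambda \coloneqq \Gamma_{p}\langle u+v\rangle + \Gamma_{p}\langle u\rangle + \Gamma_{p}\langle v\rangle + \sum_{l}\absbig{\Gamma_{p}^{(l)}\langle u;v,\dots,v\rangle}.
\]
Approximate $\varphi$ in $L^{1}(X,\lambda)$ by a sequence in $C_{c}(X)$ that is uniformly bounded. This is the same device as in the proof of Proposition \ref{prop:genintform}. All terms pass to the limit, so the inequality holds with right-hand side $C\norm{\varphi}_{\sup}\Gamma_{p}\langle v\rangle(X) \le C\norm{\varphi}_{\sup}\mathcal{E}_{p}(v) \lesssim \abs{v}_{\mathcal{F}_{p}}^{p}$. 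This is $o(\abs{v}_{\mathcal{F}_{p}}^{m(p)})$, because $p > m(p)$.

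\emph{Converse Taylor theorem.} We apply the converse of Taylor's theorem \cite[Theorem on p.~348]{AD71} with candidate $l$-forms $P_{l}(u) \coloneqq p(p-1)\cdots(p-l+1)\int_{X}\varphi\,d\Gamma_{p}^{(l)}\langle u;\,\cdot\,\rangle$. By Proposition \ref{prop:genintform}\ref{it:genintform-form}, each $P_{l}(u)$ is a symmetric $l$-linear form. It is bounded by \eqref{eq:em-holder} with $\psi_{j}\equiv 1$, which gives a bound $\norm{\varphi}_{\sup}\abs{u}_{\mathcal{F}_{p}}^{p-l}\prod_j\abs{v_j}_{\mathcal{F}_{p}}$ after using $\Gamma_{p}\langle w\rangle(X)\le\mathcal{E}_{p}(w)$ and \eqref{eq:WM}.

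The map $u \mapsto P_{l}(u)$ is continuous in operator norm, uniformly on bounded sets, by \eqref{eq:em-ncont}. This gives a Hölder modulus $\abs{u_1-u_2}_{\mathcal{F}_{p}}^{(p-l)\wedge 1}$ times $\prod_j\abs{v_j}_{\mathcal{F}_{p}}$. The terms with $m(p) < l \le \lfloor p\rfloor$ occur only when $p\in\mathbb{N}$, i.e.\ $l=p$. They are bounded by $\abs{v}^{p}$ and can be absorbed into the remainder. Hence the functional is $C^{m(p)}$ with the stated derivatives.

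\emph{Even $p$.} Here $\Gamma_{p}^{(p-1)}\langle u;\,\cdot\,\rangle$ is linear in $u$ and equals $\Gamma_{p}^{(p)}\langle u,\,\cdot\,\rangle$. This follows from \eqref{eq:identity.even}, transferred first to $C_{c}$ test functions via \eqref{eq:-integral.limit} and then to Borel $\varphi$ via uniqueness in Proposition \ref{prop:multilinear.em} and density. So $D^{p-1}$ is a bounded linear map of $u$. Its derivative is the constant $p!\int_{X}\varphi\,d\Gamma_{p}^{(p)}$, and all higher derivatives vanish.

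The main obstacle is the density step extending the remainder estimate from $C_{c}(X)$ to bounded Borel $\varphi$. It must be uniform enough to keep the constant $C\norm{\varphi}_{\sup}$. It also relies on $\Gamma_{p}^{(l)}$ being defined as genuine measures with $\lambda$ finite, which Proposition \ref{prop:multilinear.em} provides.
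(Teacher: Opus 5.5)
Your proposal is correct and takes essentially the same route as the paper. You extend the Banach--Saks remainder estimate from $C_{c}(X)$ to bounded Borel $\varphi$ by $L^{1}$-density with respect to a finite dominating measure, then apply the converse Taylor theorem, with the even case coming from the linearity identity $\Gamma_{p}^{(p-1)}\langle u;\,\cdot\,\rangle=\Gamma_{p}^{(p)}\langle u,\,\cdot\,\rangle$. Your reduction to $\varphi\ge 0$ and your explicit check of boundedness and continuity via Proposition \ref{prop:genintform} only spell out what the paper leaves as ``the same as Theorem \ref{thm:-form.diffble}''. One small tidy-up: absorb the $l=p$ term (for integer $p$) into the $C\abs{b}^{p}$ remainder already at the level of \eqref{eq:taylor}, before letting $n\to\infty$. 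The full-sequence limit \eqref{eq:-integral.limit} is only available for $l\le m(p)$, or for $l=p$ when $p$ is even.
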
 
\begin{proof}
	Let $u,v \in \mathcal{D}_{p,b}$. 
	Since $C_{c}(X)$ is dense in $L^1(X,\nu)$ for the finite Borel measure $\nu$ on $X$ given by 
	\[
	\nu \coloneqq \Gamma_{p}\langle u+v \rangle + \Gamma_{p}\langle u \rangle + \sum_{l=1}^{\lfloor p \rfloor}\absBig{\Gamma_{p}^{(l)}\langle u; v,v,\dots,v \rangle}, 
	\]
	we can easily extend \eqref{eq:-integral.remainder} to the following estimate by using \eqref{eq:-integral.limit}: 
	\begin{align*}
		&\absBigg{\int_{X}\varphi\,d\Gamma_{p}\langle u+v \rangle - \int_{X}\varphi\,d\Gamma_{p}\langle u \rangle - \sum_{l=1}^{\lfloor p \rfloor}\binom{p}{l}\int_{X}\varphi\,d\Gamma_{p}^{(l)}\langle u;v,v,\dots,v \rangle} \\
		&\le C\norm{\varphi}_{\sup}\mathcal{E}_{p}(v). 
	\end{align*}
	Once this estimate is established, the rest of the argument is the same as in the proof of Theorem \ref{thm:-form.diffble}. The case where $p$ is even can also be shown by a direct computation, as in the proof of Theorem \ref{thm:-form.diffble}. 
\end{proof}


\end{document}